\newcommand\kbar{{\overline{k}}}
\newcommand\Xbar{{\overline{X}}}
\newcommand\Ubar{{\overline{U}}}
\newcommand\Ybar{{\overline{Y}}}
\newcommand\That{{\widehat{T}}}
\newcommand\Mhat{{\widehat{M}}}
\newcommand\tX{{\widetilde{X}}}
\newcommand\ttt{\mathbf{t}}
\newcommand\xx{\mathbf{x}}
\newcommand\yy{\mathbf{y}}
\newcommand\zz{\mathbf{z}}
\newcommand\AAA{\mathbb{A}}
\newcommand\ZZ{\mathbb{Z}}
\newcommand\QQ{\mathbb{Q}}
\newcommand\RR{\mathbb{R}}
\newcommand\GG{\mathbb{G}}
\newcommand\GmZ{\GG_{\mathrm{m}}}
\newcommand\Gm[1]{\GG_{\mathrm{m},#1}}
\newcommand\sm{\mathrm{sm}}
\newcommand\T{\mathcal{T}}
\newcommand\XX{\frak{X}}
\newcommand\YY{\frak{Y}}
\newcommand\OO{\mathcal{O}}
\DeclareMathOperator\Pic{Pic}
\DeclareMathOperator\Div{Div}
\DeclareMathOperator\Gal{Gal}
\DeclareMathOperator\Hom{Hom}
\DeclareMathOperator\Br{Br}
\DeclareMathOperator\disc{disc}
\DeclareMathOperator\inv{inv}
\DeclareMathOperator\vol{vol}
\DeclareMathOperator\sgn{sign}
\newtheorem{theorem}{Theorem}
\newtheorem{question}[theorem]{Question}
\newtheorem{lemma}[theorem]{Lemma}
\newtheorem{prop}[theorem]{Proposition}
\newtheorem{cor}[theorem]{Corollary}
\theoremstyle{definition}
\newtheorem{definition}[theorem]{Definition}
\newtheorem{example}[theorem]{Example}
\newtheorem*{ack}{Acknowledgements}
\newtheorem*{terminology}{Terminology}
\newtheorem{remark}[theorem]{Remark}
\numberwithin{equation}{section}
\numberwithin{theorem}{section}
\begin{document}

\title{Strong approximation and descent}

\author{Ulrich Derenthal}

\address{Institut f\"ur Algebra, Zahlentheorie und Diskrete
  Mathematik, Leibniz Universit\"at Hannover,
  Welfengarten 1, 30167 Hannover, Germany}

\email{derenthal@math.uni-hannover.de}

\author{Dasheng Wei}

\address{Academy of Mathematics and System Science, CAS, Beijing 100190,
  P.\ R.\ China}

\email{dshwei@amss.ac.cn}

\date{November 26, 2014}

\begin{abstract}
  We introduce descent methods to the study of strong approximation on
  algebraic varieties. We apply them to two classes of varieties
  defined by $P(\ttt)=N_{K/k}(\zz)$: firstly for quartic extensions of
  number fields $K/k$ and quadratic polynomials $P(\ttt)$ in one
  variable, and secondly for $k=\QQ$, an arbitrary number field $K$
  and $P(\ttt)$ a product of linear polynomials over $\QQ$ in at least
  two variables. Finally, we illustrate that a certain unboundedness
  condition at archimedean places is necessary for strong
  approximation.
\end{abstract}

\subjclass[2010]{14G05 (11D57, 14F22)}

% 14G05: Algebraic geometry->Arithmetic problems. Diophantine geometry->Rational points
%
% 11D57: Number theory->Diophantine equations->Multiplicative and norm form equations
%
% 14F22: Algebraic geometry->(Co)homology theory->Brauer groups of schemes

\maketitle

\setcounter{tocdepth}{1}
\tableofcontents

\section{Introduction}

Let $k$ be a number field. We study strong approximation with Brauer--Manin
obstruction for two families of algebraic varieties $X \subset \AAA^{n+s}_k$
defined by equations of the form
\begin{equation}\label{eq:norm}
  P(\ttt) = N_{K/k}(\zz),
\end{equation}
where $P(\ttt) \in k[t_1, \dots, t_s]$ is a polynomial in $s$ variables over
$k$ and $N_{K/k}$ is a norm form for an extension $K/k$ of degree $n$.

In our proofs, we use descent to reduce the problem to strong approximation on
their universal torsors. While descent has been applied frequently to weak
approximation, the precise formulation of our descent lemma seems to be
crucial for its first applications to strong approximation.

To prove strong approximation on universal torsors, we reduce it to
quadrics by the fibration method in one case, and we use a
generalization of a result of Browning and Matthiesen, based on
methods of Green, Tao, and Ziegler from additive combinatorics, in the
other case.

In our results, we encounter an unboundedness condition at the archime\-dean
places. We give a counterexample to strong approximation that shows that such
a condition is generally necessary.

\subsection*{Background}

Let $X$ be an algebraic variety defined over a number field $k$. We say that
strong approximation holds for $X$ off a finite set $S$ of places of $k$ if
the image of the set $X(k)$ of rational points on $X$ is dense in the space
$X(\AAA_k^S)$ of adelic points on $X$ outside $S$. Strong approximation for $X$
off $S$ implies the Hasse principle for $S$-integral points on any
$S$-integral model of $X$.

For a proper variety $X$, strong approximation off $S=\emptyset$ is the same
as weak approximation. For an affine variety $X$, however, studying strong
approximation and the Hasse principle for integral points on integral models
seems to be generally much harder than studying weak approximation and the
Hasse principle for rational points on proper models of $X$.

Failures of weak approximation and the Hasse principle for rational points on
proper varieties are often explained by Brauer--Manin obstructions, introduced
by Manin \cite{MR0427322}. Only recently this was generalized to strong
approximation by Colliot-Th\'el\`ene and Xu \cite{MR2501421}.

For algebraic groups and their homogeneous spaces, weak and strong
approximation and the Hasse principle have been widely studied. For
certain simply connected semisimple groups and their principal
homogeneous spaces, we have the classical work of Kneser, Harder,
Platonov and others. For certain homogeneous spaces of connected
algebraic groups with connected or abelian stabilizers, see for
example \cite{MR1390687, MR2501421, MR2429455,MR2783137, MR2928335, MR3008912,MR2989976} and the
references therein for weak and strong approximation with
Brauer--Manin obstruction. This includes varieties defined by
(\ref{eq:norm}) for constant $P$ and arbitrary $K/k$, and for $s=1$
with $\deg(P(t))=2$ and $[K:k]=2$.

Much less is known for more general varieties that are not homogeneous
spaces of algebraic groups. For weak approximation with Brauer--Manin
obstruction, let us mention the now classical example of Ch\^atelet
surfaces \cite{MR870307, MR876222}, which are actually smooth proper
models of certain varieties defined by (\ref{eq:norm}). See
\cite{MR2989431, arXiv:1202.3567, arXiv:1202.4115} and the references therein for weak
approximation results for several further classes of varieties defined
by (\ref{eq:norm}).

A recent breakthrough is the introduction of deep results from additive
combinatorics due to Green--Tao--Ziegler and Matthiesen to deduce weak
approximation for varieties of the form (\ref{eq:norm}) when $P(t)$ is
a product of arbitrarily many linear polynomials over $\QQ$
\cite{MR3194818, arXiv:1304.3333, arXiv:1307.7641}.

For strong approximation with Brauer--Manin obstruction for affine varieties
defined by equations of the form
\begin{equation*}
  P(t) = q(z_1,z_2,z_3),
\end{equation*}
see \cite{MR3007293}. For more general fibrations over $\AAA^1_k$ with
\emph{split} (e.g., geometrically integral) fibers, see
\cite{arXiv:1209.0717}.

Colliot-Th\'el\`ene and Harari \cite[p.~4]{arXiv:1209.0717} ask for
the integral Hasse principle and strong approximation for the
equation~(\ref{eq:norm}) with $s=1$, a separable polynomial $P(t)$ of
degree at least $3$, and $[K:k]=2$. They say that this is out of
reach of the current techniques because of the following two
essential difficulties: on the one hand, $\Br(X_t)/\Br(k)$ is infinite
for each smooth $k$-fiber $X_t$ for the natural fibration $X \to \AAA^1_k$
via projection to the $t$-coordinate, and on the other hand, the fibers
over the roots of $P(t)$ are not split. Note that the same
difficulties occur for $P(t)$ of degree at least $2$, and $[K:k] \ge
3$.

Counterexamples to strong approximation explained by Brauer--Manin
obstructions can be found in \cite{MR2471948, MR3106738}. See also
\cite{MR2975237} for computations of Brauer--Manin obstructions for integral
points on certain cubic surfaces.

\subsection*{Our results}

We obtain the first strong approximation results for varieties defined
by (\ref{eq:norm}). Here, both of the difficulties occur that were
mentioned in \cite[p.~4]{arXiv:1209.0717}. In our main theorems, we
consider two families of such varieties. Our notation is mostly
standard; see the end of the introduction for a reminder.

\begin{theorem}\label{thm:main}
  Let $K/k$ be an extension of number fields.  Assume that
  $P(t)=c(t^2-a) \in k[t]$ is an irreducible quadratic polynomial, and
  $[K:k]=4$ with $\sqrt{a} \in K$. Let $X \subset \AAA^5_k$ be defined
  by (\ref{eq:norm}) with $s=1$.

  Assume that there is an archimedean place $v_0$ such that
  $p(X(k_{v_0}))$ is not bounded, where $p:X \to \AAA^1_k$ is the
  projection to the $t$-coordinate.  Then strong approximation with
  Brauer--Manin obstruction holds for $X$ off $v_0$.
\end{theorem}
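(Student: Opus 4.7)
The plan is the two-step strategy announced in the introduction: descent to universal torsors, followed by strong approximation on each torsor via a fibration whose generic fibers are quadrics. Set $F = k(\sqrt a)$, so $k \subset F \subset K$ with $[F:k] = [K:F] = 2$, and choose $d \in F^\ast$ with $K = F(\sqrt d)$. Using $t^2 - a = N_{F/k}(t - \sqrt a)$ and $N_{K/k} = N_{F/k} \circ N_{K/F}$, the defining equation rewrites as
\[
  c \cdot N_{F/k}(t-\sqrt a) \;=\; N_{F/k}\bigl(N_{K/F}(\zz)\bigr),
\]
where $N_{K/F}(\zz) = \alpha^2 - d\beta^2$ (with $\alpha,\beta \in F$ linear in $\zz$) is a binary quadratic form over $F$. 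This inner binary form is what will eventually produce the quadrics.

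\medskip

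\textbf{Descent.} I would first compute the Picard module of a smooth compactification $X^c$ as a $\Gal(\kbar/k)$-module. The boundary contains the two degenerate fibers above $t = \pm\sqrt a$ (swapped by $\Gal(F/k)$ and further decomposed according to the behavior of $K/F$) together with components at infinity coming from the norm form. From this module I would read off a finite family of universal torsors $\YY_\lambda \to X$; the descent lemma of this paper then reduces strong approximation with Brauer--Manin obstruction on $X$ off $v_0$ to (unobstructed) strong approximation off the preimages of $v_0$ on each $\YY_\lambda$. Concretely I expect $\YY_\lambda$ to be presented, after base change to $F$, by a pair of equations of the shape
\[
  c_1(t-\sqrt a) = N_{K/F}(\zz_1), \qquad c_2(t+\sqrt a) = N_{K/F}(\zz_2),
\]
with $c_1, c_2 \in F^\ast$ Galois conjugate to one another and satisfying an explicit compatibility with $c$, and with $(\zz_1,\zz_2)$ descending to $k$ and recovering $\zz$ up to a specified unit.

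\medskip

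\textbf{Fibration to quadrics.} Projecting $\YY_\lambda \to \AAA^1_k$ via the $t$-coordinate, each $F$-equation $c_i(t\mp\sqrt a) = N_{K/F}(\zz_i)$ is a binary affine quadric bundle over $\AAA^1_F$, and taking the pair together and Weil-restricting to $k$ produces a fibration of $\YY_\lambda$ over $\AAA^1_k$ whose smooth fibers are affine quadrics over $k$. Strong approximation with Brauer--Manin obstruction for such quadric fibrations is accessible through results of Colliot-Th\'el\`ene--Xu and their refinements admitting non-split fibers. The archimedean hypothesis that $p(X(k_{v_0}))$ is unbounded, pulled back to $\YY_\lambda$, furnishes an unbounded orbit of adelic points at $v_0$ and supplies the non-compactness input at the real place needed to run the fibration method off $v_0$.

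\medskip

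\textbf{Main obstacle.} The core work is the descent itself. One must exhibit enough explicit torsors to kill the algebraic Brauer obstruction on $X$, and verify that every adelic point orthogonal to this obstruction lifts to some $\YY_\lambda$ at every place, $v_0$ included. This is precisely how the two difficulties flagged by Colliot-Th\'el\`ene--Harari in \cite{arXiv:1209.0717} --- the non-splitness of the fibers over $t = \pm\sqrt a$ and the infinitude of $\Br(X_t)/\Br(k)$ on smooth fibers --- are defused: on $\YY_\lambda$ the bad fibers acquire splittings over $F$, and the transcendental part of $\Br(X_t)$ becomes trivial modulo $\Br(\YY_\lambda)$, leaving only standard quadric-fibration input for the second step.
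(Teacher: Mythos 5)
Your proposal correctly identifies the two-step strategy advertised in the introduction (descent to universal torsors, then fibration to quadrics), and the observation $N_{K/k}=N_{F/k}\circ N_{K/F}$ is the right starting point. However, there are two critical errors in the execution, both of which the paper's actual proof is careful to avoid.

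First, the torsor description is wrong. You propose a local model for the universal torsor of the shape $c_1(t-\sqrt a)=N_{K/F}(\zz_1)$, $c_2(t+\sqrt a)=N_{K/F}(\zz_2)$, i.e., one norm form on the right-hand side of each equation. This is not the universal torsor but an intermediate (vertical) torsor; the dimensions do not match the rank of $\Pic(\Xbar)$. The universal torsor over $U=X\cap\{P(t)\ne 0\}$, taken from \cite[Proposition~2]{arXiv:1202.3567}, is $\T_U \subset \AAA^1_k \times R_{K/k}(\Gm{K})^2$ defined by
\[
  t-\sqrt a=\rho\cdot N_{K/L}(\xx)\cdot \sigma(N_{K/L}(\yy)),
\]
i.e., the right-hand side is a \emph{product of two} norm forms in \emph{two} independent vectors $\xx,\yy\in K$, one of the norms twisted by $\sigma$. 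This bilinearity in $(\xx,\yy)$ is the structural feature that your version misses.

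Second, and consequently, the fibration is set up over the wrong base. You project the torsor to the $t$-line $\AAA^1_k$ and claim the fibers are affine quadrics. They are not: for fixed $t$ your equations each give an affine conic ($\simeq\GmZ$ geometrically), and after Weil restriction the fiber is a torsor under a torus, for which strong approximation fails (this is precisely why a single equation of the form $c(t-a)=N_{K/F}(\zz)$ is not amenable to the fibration method over $\AAA^1_k$). Moreover the fibers over the two roots $t=\pm\sqrt a$ of $P(t)$ are non-split, which is exactly the difficulty of \cite[p.~4]{arXiv:1209.0717} that one must circumvent; a passing appeal to ``refinements admitting non-split fibers'' does not suffice. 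The paper instead expands $N_{K/L}(\xx)=g_0(\xx)+g_1(\xx)\sqrt a$ and $\rho\,\sigma(N_{K/L}(\yy))=\lambda_0(\yy)+\lambda_1(\yy)\sqrt a$, rewrites the torsor inside $Y\subset\AAA^8_k$ cut out by the single equation $-1=g_0(\xx)\lambda_1(\yy)+g_1(\xx)\lambda_0(\yy)$, and then fibers $Y$ over $\AAA^4_k$ via the $\yy$-coordinates: for $N_{K/k}(\yy)\ne 0$, the fiber is a smooth three-dimensional affine quadric, a homogeneous space of the spin group, for which strong approximation off $v_0$ does hold once non-compactness at $v_0$ is checked. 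The non-compactness check is itself nontrivial and uses the unboundedness hypothesis on $p(X(k_{v_0}))$ to pin down the sign of $c$ (and hence of $N_{L/k}(\rho)$). None of this appears in your sketch, and without the correct torsor form and the projection to $\yy$-space instead of the $t$-line, the argument does not go through.
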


We also compute $\Br(X)$ for $X$ as in Theorem~\ref{thm:main}, and in certain
cases we deduce the Hasse principle for integral points and strong
approximation (without Brauer--Manin obstructions), see
Corollaries~\ref{cor:strong_approximation} and \ref{cor:hasse_principle}.

\begin{theorem}\label{thm:totally_split}
  Let $k=\QQ$. For $s\ge 2$, let $P(\ttt) \in \QQ[t_1, \dots, t_s]$ be a
  product of pairwise proportional or affinely independent linear polynomials
  over $\QQ$.  Let $K/\QQ$ be an arbitrary extension of number fields of
  degree $n$. Let $X \subset \AAA^{n+s}_\QQ$ be the affine variety defined by
  (\ref{eq:norm}).

  Let $C$ be the union of the connected components of $p(X^\sm(\RR))$
  that contain balls of arbitrarily large radius, where $p:X \to
  \AAA^s_k$ is the projection to the $\ttt$-coordinates. Then $X^\sm(k)$
  is dense in the image of $(p^{-1}(C) \times
  X^\sm(\AAA_k^f))^{\Br_1(X^\sm)}$ in $X(\AAA_k^f)$.

  If $K$ is not totally imaginary or if the factors of $P(\ttt)$ are linear
  forms, then $C=p(X^\sm(\RR))$, and $X$ satisfies smooth strong approximation
  with algebraic Brauer--Manin obstruction off $\infty$ (see
  Definition~\ref{def:smooth_sa_hp}).
\end{theorem}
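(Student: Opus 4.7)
The plan is a two-step argument: descend from $X^\sm$ to its universal torsors, and then establish strong approximation on the torsors using a generalization of the Browning--Matthiesen theorem based on Green--Tao--Ziegler. First I would apply the descent lemma of the paper to $X^\sm$, whose geometric Picard group is generated by the divisors cut out by the linear factors of $P(\ttt)$ and by the norm form. The lemma reduces smooth strong approximation for $X$ with algebraic Brauer--Manin obstruction off $\infty$ to unconditional strong approximation off $\infty$ on each of a finite family of universal torsors $\T \to X^\sm$, for adelic points whose archimedean projection to $\AAA^s(\RR)$ lies in $C$.

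Next I would write down the torsors explicitly. Under the hypothesis on $P(\ttt)$, after grouping proportional factors the single equation $P(\ttt) = N_{K/\QQ}(\zz)$ becomes, on each universal torsor, a system of the form
\begin{equation*}
  \ell_i(\ttt) = \alpha_i N_{K/\QQ}(\zz_i), \qquad i=1,\dots,r,
\end{equation*}
where $\ell_1,\dots,\ell_r$ are affinely independent linear polynomials over $\QQ$ and $\alpha_i \in \QQ^\times$ parametrize a finite set of twists. The generalization of Browning--Matthiesen then yields rational points on this system approximating any prescribed adelic point off $\infty$, provided the real projection lies in a component of $p(X^\sm(\RR))$ admitting balls of arbitrarily large radius --- which is exactly the defining property of $C$, and is what the Green--Tao--Ziegler input demands in order to run the simultaneous prime-values count for the affinely independent forms $\ell_i(\ttt)/\alpha_i$ as norms from $K$. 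Combined with the descent step this gives the first assertion of the theorem.

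For the second assertion I would verify $C = p(X^\sm(\RR))$ in both listed cases. If $K$ is not totally imaginary, then $N_{K/\QQ}:(K\otimes_\QQ\RR)^\times \to \RR^\times$ is surjective, so the fiber of $p$ over every $\ttt \in \AAA^s(\RR)$ meets $X^\sm(\RR)$ and $p(X^\sm(\RR)) = \RR^s$ is itself one unbounded component. If instead the $\ell_i$ are linear forms (homogeneous), then $P(\lambda\ttt) = \lambda^{r}P(\ttt)$ for $\lambda > 0$, where $r$ is the total degree of $P$; since $\lambda^r > 0$ preserves the sign of $P(\ttt)$ and hence the solvability of the norm equation in $\zz$, the set $p(X^\sm(\RR))$ is stable under positive real scaling. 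Each of its connected components is therefore a cone, hence unbounded, so again $C = p(X^\sm(\RR))$.

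The hard part is the additive-combinatorics input. The Green--Tao--Ziegler machinery underlying Browning--Matthiesen was developed for a single norm equation $P(t) = N_{K/\QQ}(\zz)$ over $\QQ$; here it must be extended to the torsor-level system above in $s \ge 2$ variables, with an arbitrary number field $K$ and multiple twists $\alpha_i$, while simultaneously enforcing adelic congruence conditions off $\infty$ and staying within the prescribed archimedean component. The technical core is to reprove the simultaneous major-arc asymptotic and the nilsequence orthogonality in this multidimensional, twisted setting without losing the cancellations that make the prime-values count dense enough to furnish strong (rather than merely weak) approximation.
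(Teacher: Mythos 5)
Your approach matches the paper's: apply the descent lemma, identify the universal torsors with systems $\ell_i(\ttt) = \alpha_i N_{K/\QQ}(\zz_i)$, and invoke a generalization of Browning--Matthiesen (from linear forms to linear polynomials) to get strong approximation on the torsor level. The not-totally-imaginary and linear-forms arguments for $C = p(X^\sm(\RR))$ are also the paper's.

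However, there is a genuine gap in how you handle the totally imaginary case. You state that the descent lemma reduces the problem to strong approximation ``on each of a finite family of universal torsors \dots for adelic points whose archimedean projection lies in $C$,'' but the descent lemma (Lemma~\ref{lem:passage}) has no such conditional clause: its hypothesis is that $Y$ satisfies smooth strong approximation \emph{unconditionally}. When $K$ is totally imaginary, that hypothesis fails for some torsors, so one cannot cite the lemma directly. One must instead re-run its proof and establish the following concrete matching: if $(p_v) \in X^\sm(\AAA_\QQ)$ has $\ttt_\infty$ in a component $Q_\epsilon$ of $C$, and if $(p_v)$ lifts to a universal torsor $\T$ with parameters $(\lambda_i)$, then after perturbing into $\T_U(\AAA_\QQ)$ the equations $\lambda_i^{-1} g_i(\ttt_\infty') = N_{K/\QQ}(\zz_{i,\infty}') > 0$ force $\sgn(\lambda_i^{-1}) = \epsilon_i$, so that the polytope $Q$ attached to the torsor's model $Y$ is exactly the interior of $Q_\epsilon$ and therefore contains arbitrarily large balls by the defining property of $C$. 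Your proposal asserts that the unboundedness of the component of $p(X^\sm(\RR))$ ``is exactly'' what the additive-combinatorics input demands, but it never explains why the polytope determined by the \emph{particular} torsor the adelic point lifts to coincides (up to interior) with that component; without the sign argument there is no reason the right twist $(\lambda_i)$ appears, and the application of the Browning--Matthiesen-type asymptotic does not go through.

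A secondary, more minor, imprecision: the needed extension of the Browning--Matthiesen analytic input is not primarily about ``multiple twists $\alpha_i$'' or arbitrary $K$ (their Theorem~5.2 already handles $r$ simultaneous norm equations for a general $K$); the crucial generalization is from homogeneous linear forms to affine linear polynomials, which requires reworking the reduction to their Proposition~7.10 because the inhomogeneous region $\frak{K}$ cannot simply be decomposed into cones.
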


Furthermore, we can deduce a smooth Hasse principle with Brauer--Manin
obstruction for integral points on $X$ as in
Theorem~\ref{thm:totally_split}. See
Corollary~\ref{cor:smooth_hasse_principle} for details.

\subsection*{Techniques}

There are two fundamental techniques to reduce the study of weak approximation
and the Hasse principle for rational points (possibly with Brauer--Manin
obstruction) for one class of varieties to the same questions for other
varieties, namely the \emph{fibration method} and the \emph{descent
  method}. One may ask whether these techniques can also be applied in the
context of strong approximation.

The fibration method typically applies to fibrations $f: X \to Y$ where weak
approximation or the Hasse principle is known both for the fibers of $f$ and
the base $Y$. An example is the deduction of the Hasse principle for quadratic
forms in five variables from the four-variable-case. See \cite{MR870307,
  MR876222} for more involved applications, and \cite{MR1284820} for its
combination with Brauer--Manin obstructions.

Generalizing the fibration method to strong approximation is achieved in some
generality in \cite{MR3007293, arXiv:1209.0717}.

The descent method reduces the study of weak approximation and the Hasse
principle with (algebraic) Brauer--Manin obstruction to their study on torsors
under tori over the original variety (for example \emph{universal
  torsors}). This is expected to simplify the task since typically no
(algebraic) Brauer--Manin obstruction occurs on universal torsors. See
\cite{MR592151, MR870307, MR876222} for applications of
descent to weak approximation.

To our knowledge, Theorems~\ref{thm:main} and \ref{thm:totally_split} are the
first applications of the descent method to strong approximation. For this, an
important auxiliary result is the descent lemma presented in
Section~\ref{sec:descent_lemma}. A subtle point was to find the right
formulation that makes it applicable in practice. Then the proof of this lemma is an
easy application of descent theory as introduced by Colliot-Th\'el\`ene and
Sansuc, see \cite{MR89f:11082}.

Our descent lemma reduces strong approximation with algebraic Brauer--Manin
obstruction on the original variety to strong approximation on auxiliary
varieties containing open subsets of universal torsors.

For $X$ as in Theorem~\ref{thm:main}, we have determined a local description of
universal torsors in \cite{arXiv:1202.3567}. Here, we observe that these are
essentially fibrations over $\AAA^4_k$ whose fibers are smooth $3$-dimensional
quadrics, so that an application of the fibration method yields the result.

For $X$ as in Theorem~\ref{thm:totally_split}, we show that the
universal torsors are essentially varieties for which weak
approximation was proved in recent work of Browning and Matthiesen
\cite{arXiv:1307.7641} (on $X$ defined over $k=\QQ$ by (\ref{eq:norm})
in cases where $P(t)$ is totally split over $\QQ$, with $s=1$; note
that we cannot treat the case $s=1$ in
Theorem~\ref{thm:totally_split}, see Remark~\ref{remark:s_ge_2} for
the reason), based on results from additive combinatorics by
Green--Tao--Ziegler and Matthiesen.  From a generalization of the key
technical result \cite[Theorem~5.2]{arXiv:1307.7641} of Browning and
Matthiesen from linear forms to linear polynomials, we deduce that the
varieties containing our universal torsors also satisfy strong
approximation.

The descent method may lead to further results on strong approximation
for varieties defined by (\ref{eq:norm}) when combined with other
analytic techniques. For example, it would be interesting to combine
it with sieve methods (see \cite{MR1961196, arXiv:1202.3567} and
\cite{arXiv:1310.6158} for results on weak approximation) and the
circle method (see \cite{MR1961196} and \cite{MR3188633} for results
on weak approximation).

\subsection*{An unboundedness condition}

We observe that Theorems~\ref{thm:main} and \ref{thm:totally_split}
include unboundedness conditions at an archimedean place. This has
some resemblence with conditions at archimedean places appearing in
strong approximation results on homogeneous spaces of algebraic
groups, e.g., \cite[Theorem~7.12]{MR1278263}.

Example~\ref{exa:bounded} shows that strong approximation with
Brauer--Manin obstruction off $\infty$ does not hold for the variety
$X \subset \AAA^3_\QQ$ defined by
\begin{equation*}
  t(t-2)(t-10)=x^2+y^2,
\end{equation*}
which is an example of (\ref{eq:norm}). Here, $X(\RR)$ has a bounded and an
unbounded connected component.

This counterexample and Theorems~\ref{thm:main} and
\ref{thm:totally_split} lead us to the expectation that only the
following version of smooth strong approximation can be true for
varieties defined by (\ref{eq:norm}).  See also
\cite{arXiv:1209.0717}.

\begin{question}\label{qu:strong}
  Let $K/k$ be an extension of number fields of degree $n$, let $P(\ttt) \in
  k[t_1, \dots, t_s]$ be a non-constant polynomial. Let $X \subset
  \AAA^{n+s}_k$ be the affine variety over $k$ defined by
  (\ref{eq:norm}). Let $v_0$ be an archimedean place.

  Let $p : X \to \AAA^s_k$ be the projection to the
  $\ttt$-coordinates. Let $C$ be the union of the connected components
  of $p(X^\sm(k_{v_0}))$ that contain balls of
  arbitrarily large radius.

  Is $X^\sm(k)$ dense in the image of $(p^{-1}(C) \times X^\sm(\AAA_k^{\{v_0\}}))^{\Br(X^\sm)}$ in
  $X(\AAA_k^{\{v_0\}}))$ with respect to the adelic topology?
\end{question}

Theorems~\ref{thm:main} and \ref{thm:totally_split} give an
affirmative answer to this question for two families of
varieties.

\begin{terminology}
  For a field $k$ of characteristic $0$, fix an algebraic closure
  $\kbar$, and let $\Gamma_k$ be the absolute Galois group. Let
  $\Br(k)$ be the Brauer group of $k$. For a scheme $X$ over $k$, let
  $X^\sm$ be its smooth locus, and let $\Xbar:= X \times_k \kbar$. Let
  $\Br(X):=H^2_\text{\'et}(X,\GmZ)$ be the cohomological Brauer group,
  $\Br_0(X)$ its subgroup of constant elements, namely the image of
  the natural map $\Br(k) \to \Br(X)$, and $\Br_1(X)$ its algebraic
  Brauer group, namely the kernel of the natural map $\Br(X) \to
  \Br(\Xbar)$.

  Now let $k$ be a number field. Then $\Omega_k$ denotes the set of
  places of $k$, and $\infty_k$ denotes its subset of archimedean
  places. We write $v < \infty_k$ for $v \in \Omega_k \setminus
  \infty_k$. The ring of integers in $k$ is denoted by $\OO_k$. For $v
  \in \Omega_k$, let $k_v$ be the completion of $k$ at the place $v$,
  and let $\OO_v$ be the ring of integers in $k_v$.  The adele ring
  with its usual adelic topology is denoted by $\AAA_k$.  For a finite
  subset $S \subset \Omega_k$, let $\OO_S$ be the ring of $S$-integers
  of $k$, and let $\AAA_k^S \subset \prod_{v \in \Omega_k \setminus S}
  k_v$ be the adeles without $v$-component for all $v \in S$, which
  also comes with a natural adelic topology. In particular, we write
  $\AAA_k^f:=\AAA_k^{\infty_k}$ for the adeles without archimedean
  components.

  Let $X$ be a geometrically integral variety over a number field $k$,
  and let $S \subset \Omega_k$ be a finite set of places. We say that
  \emph{strong approximation holds for $X$ off $S$} if $X(k)$ is dense
  in the image of $X(\AAA_k)$ in $X(\AAA_k^S)$.

  One says that \emph{strong approximation with (algebraic)
    Brauer--Manin obstruction holds for $X$ off $S$} if $X(k)$ is
  dense in the image of $X(\AAA_k)^{\Br(X)}$ (resp.\
  $X(\AAA_k)^{\Br_1(X)}$) in $X(\AAA_k^S)$ under the natural projection
  (see \cite[Definition~2.4]{MR3007293}). Here, $X(\AAA_k)^B$ is the
  set of all adelic points $(x_v) \in X(\AAA_k)$ satisfying $\sum_{v
    \in \Omega_k} \inv_v(\beta(x_v)) = 0$ for all $\beta$ in a subset $B
  \subset \Br(X)$, where $\inv_v: \Br(k_v) \to \QQ/\ZZ$ is the
  invariant map from local class field theory.
\end{terminology}

\begin{ack}
  The first author was supported by grants DE 1646/2-1 and DE 1646/3-1 of the
  Deutsche Forschungsgemeinschaft. The second author is supported by National
  Key Basic Research Program of China (Grant No. 2013CB834202) and
  National Natural Science Foundation of China (Grant Nos. 11371210 and
  11321101).

  We thank J.-L.~Colliot-Th\'el\`ene and L.~Matthiesen for useful
  remarks and discussions, in particular during the first author's
  stay at the IH\'ES (September/October 2013), whose support and
  hospitality is gratefully acknowledged. We are grateful to the
  anonymous referees for their helpful comments.
\end{ack}

\section{Strong approximation on singular varieties}\label{sec:smooth_SA}

Let $k$ be a number field. For smooth varieties $X$ over $k$, it is
interesting to study strong approximation because we can derive the
existence of integral points on any integral model of $X$.

For singular varieties $X$ over $k$, the implicit function theorem may
fail, and hence we cannot hope to prove strong approximation on $X$.
On the other hand, integral models of $X^\sm$ often have far less
integral points than integral models of $X$, hence strong
approximation on $X^\sm$ is generally too much to ask for.

Instead, we introduce the following notion of \emph{smooth strong
  approximation} on $X$ which is suitable to determine the existence of
integral points on any integral model of the singular variety $X$ (see
Remark~\ref{rem:smooth_sa_hp} and Corollary~\ref{cor:smooth_hasse_principle}).

\begin{definition}\label{def:smooth_sa_hp}
  Let $X$ be a geometrically integral variety over a number field $k$. Let $S$ be a finite set of places of
  $k$. We say that $X$ satisfies \emph{smooth strong approximation off
    $S$} if $X^\sm(k)$ is dense in the image of $X^\sm(\AAA_k)$ in
  $X(\AAA_k^S)$ under the natural projection.

  Analogously, we say that $X$ satisfies \emph{smooth strong approximation with
    (algebraic) Brauer--Manin obstruction off $S$} if $X^\sm(k)$ is dense in
  the image of $X^\sm(\AAA_k)^{\Br(X^\sm)}$
  (resp. $X^\sm(\AAA_k)^{\Br_1(X^\sm)}$) in $X(\AAA_k^S)$.

  We say that $X$ satisfies the \emph{smooth integral Hasse principle
    (with (algebraic) Brauer--Manin obstruction)} if the following holds
  for any integral model $\XX$ of $X$: If
  \begin{equation}\label{eq:smooth_hp_brauer_set}
    X^\sm(\AAA_k)^B \cap \left(\prod_{v \in \infty_k}
      X(k_v) \times \prod_{v < \infty_k} \XX(\OO_v)\right)
  \end{equation} is non-empty (where $B$ is $\emptyset$ resp.\ $\Br(X^\sm)$ resp.\
  $\Br_1(X^\sm)$), then there are integral points on $\XX$.
\end{definition}

For singular $X$, note that smooth strong approximation on $X$ off $S$
is not the same as strong approximation on $X^\sm$: the latter means
that $X^\sm(k)$ is dense in the image of $X^\sm(\AAA_k)$ in
$X^\sm(\AAA_k^S)$, whose adelic topology is stronger than the topology
induced by $X(\AAA_k^S)$.

\begin{remark}\label{rem:smooth_sa_hp}
  Assume that $X$ is a variety over a number field $k$ satisfying
  smooth strong approximation (with (algebraic) Brauer--Manin
  obstruction) off $\infty_k$. Then the smooth integral Hasse
  principle (with (algebraic) Brauer--Manin obstruction) holds on any integral model of $X$.

  Indeed, for any integral model $\XX$, we have $(q_v)$ in the set
  (\ref{eq:smooth_hp_brauer_set}). An open neighborhood of $(q_v)_{v <
    \infty_k} \in X(\AAA_k^f)$ is given by $\prod_{v < \infty_k}
  \XX(\OO_v)$. Then smooth strong approximation off $\infty_k$ gives a
  $q \in X^\sm(k)$ which lies in this open neighborhood. This
  ensures $q \in \XX(\OO_k)$.
\end{remark}

Next, we compare smooth strong approximation to central strong approximation
\cite[\S 8]{MR3007293}. Recall that it can be defined as follows:

\begin{definition}[Colliot-Th\'el\`ene, Xu]
  Let $X$ be a geometrically integral variety over a number field
  $k$. Let $S$ be a finite set of places of $k$. Let $f: \tX \to X$ be
  a resolution of singularities for $X$. The following two definitions
  do not depend on the choice of $\tX$.

  One says that \emph{central strong approximation holds for $X$ off $S$} if
  the diagonal image of $X^\sm(k)$ is dense in the natural image of
  $\tX(\AAA_k)$ in $X(\AAA_k^S)$.

  Assume that $\Br(\tX)/\Br_0(\tX)$ (resp. $\Br_1(\tX)/\Br_0(\tX)$) is
  finite. One says that \emph{central strong approximation with (algebraic)
    Brauer--Manin obstruction holds for $X$ off $S$} if the diagonal image of
  $X^\sm(k)$ is dense in the natural image of $\tX(\AAA_k)^{\Br(\widetilde X)}$
  (resp. $\tX(\AAA_k)^{\Br_1(\widetilde X)}$) in $X(\AAA_k^S)$.
\end{definition}

Note that the finiteness assumption on the Brauer group is generally
necessary to ensure that the definition of central strong
approximation with Brauer--Manin obstruction is independent of the
choice of $\tX$, but that no such finiteness condition is needed in
our definition of smooth strong approximation.

\begin{remark}\label{rem:comparison}
  Let $X$ be a geometrically integral variety over a number field $k$, and let
  $S$ be a finite set of places of $k$. Then smooth strong approximation on
  $X$ off $S$ is equivalent to central strong approximation on $X$ off $S$.

  Assume that $\Br(X^\sm)/\Br_0(X^\sm)$ (resp. $\Br_1(X^\sm)/\Br_0(X^\sm)$) is
  finite. Then smooth strong approximation with (algebraic) Brauer--Manin
  obstruction on $X$ off $S$ is equivalent to central strong approximation
  with (algebraic) Brauer--Manin obstruction on $X$ off $S$.

  This is proved via the version \cite[Th\'eor\`eme~1.4]{MR2011747} of
  Harari's formal lemma \cite[Corollaire~2.6.1]{MR1284820}.
\end{remark}

\section{A descent lemma}\label{sec:descent_lemma}

The following lemma, based on descent theory of Colliot-Th\'el\`ene and Sansuc
\cite{MR89f:11082}, is central in our proofs of strong approximation in the
following sections.

\begin{lemma}\label{lem:passage}
  Let $k$ be a number field. Let $X$ be an integral variety over $k$ with
  $\kbar[X^\sm]^\times = \kbar^\times$ and $\Pic(\Xbar^\sm)$ of finite
  type, and let $U$ be a dense open subset of $X^\sm$.

  Let $S$ be a finite subset of $\Omega_k$.  For any universal torsor
  $f: \T \to X^\sm$, assume that its restriction $\T_U:= \T
  \times_{X^\sm} U$ is geometrically integral, and that there is a
  commutative diagram
  \begin{equation*}
    \xymatrix{\T_U \ar@{^{(}->}[r]^i \ar[d]_{f_{|\T_U}} & Y \ar[d]_g\\
      U \ar@{^{(}->}[r] & X}
  \end{equation*}
  where $Y$ is a variety over $k$ satisfying smooth strong
  approximation off $S$, and $i: \T_U \to Y$ is an open immersion.

  Then $X$ satisfies smooth strong approximation with algebraic
  Brauer--Manin obstruction off $S$.
\end{lemma}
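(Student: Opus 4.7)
The plan is to couple the Colliot-Th\'el\`ene--Sansuc descent formalism to the strong approximation hypothesis on $Y$. We start with any adelic point $(P_v)_v \in X^\sm(\AAA_k)^{\Br_1(X^\sm)}$ and any open neighborhood $W$ of its image in $X(\AAA_k^S)$. The hypotheses $\kbar[X^\sm]^\times = \kbar^\times$ and $\Pic(\Xbar^\sm)$ of finite type place us in the framework of \cite{MR89f:11082}: universal torsors on $X^\sm$ exist, all of them are twists $\T^\sigma$ of a fixed $\T$ by classes $\sigma \in H^1(k,T)$ (with $T$ the N\'eron--Severi torus), and the fundamental descent identity $X^\sm(\AAA_k)^{\Br_1(X^\sm)} = \bigcup_{\sigma} f^\sigma(\T^\sigma(\AAA_k))$ furnishes a class $\sigma$ and an adelic lift $(Q_v)_v \in \T^\sigma(\AAA_k)$ with $f^\sigma(Q_v) = P_v$.

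Crucially, the assumption of the lemma is imposed on \emph{every} universal torsor, so it applies to the twist $\T^\sigma$ that descent produces, supplying an open immersion $i^\sigma: \T^\sigma_U \hookrightarrow Y^\sigma$ with $Y^\sigma$ satisfying smooth strong approximation off $S$ and $\T^\sigma_U$ geometrically integral. Since $f^\sigma$ is smooth (being a torsor under a torus) with $v$-adically open image at each place, and $U \subset X^\sm$ is dense open, we perturb $(Q_v)_v$ inside $\T^\sigma(\AAA_k)$ to an adelic point $(Q'_v)_v \in \T^\sigma_U(\AAA_k)$ whose image $(f^\sigma(Q'_v))_{v \notin S}$ still lies in $W$.

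We then apply smooth strong approximation for $Y^\sigma$ off $S$ to the adelic point $(i^\sigma(Q'_v))_v \in Y^{\sigma,\sm}(\AAA_k)$, obtaining a rational point $y \in Y^{\sigma,\sm}(k)$ as close as desired to $(i^\sigma(Q'_v))_{v \notin S}$ in $Y^\sigma(\AAA_k^S)$. Shrinking the target neighborhood so that it lies in $\T^\sigma_U(\AAA_k^S)$, which is open in $Y^\sigma(\AAA_k^S)$ because $i^\sigma$ is an open immersion, and so that it maps into $W$ under $f^\sigma$, forces $y \in \T^\sigma_U(k)$ and hence $f^\sigma(y) \in X^\sm(k) \cap W$, which is the required density.

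The main obstacle is more organizational than conceptual: the descent dictionary must be applied uniformly in the twist, and the openness of the maps $i^\sigma$ and $f^\sigma$ across adelic topologies (in both the archimedean and non-archimedean directions) has to be tracked carefully so that the two successive neighborhoods shrink compatibly. The reason the diagram is assumed for \emph{any} universal torsor, not just one fixed $\T$, is precisely to make this argument robust when $\T$ is replaced by the twist $\T^\sigma$ that descent selects in order to absorb the Brauer--Manin orthogonality.
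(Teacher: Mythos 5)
Your proof follows the same route as the paper: descent to a universal torsor, perturbation into the open subset $\T_U$, application of smooth strong approximation on $Y$, and descent back to $X$. The one place where your write-up is less precise than the paper is the perturbation step: you justify moving $(Q_v)$ into $\T^\sigma_U(\AAA_k)$ by invoking smoothness of $f^\sigma$ and $v$-adic openness, but what is actually needed is that $\T^\sigma_U(\AAA_k)$ is \emph{dense} in $\T^\sigma(\AAA_k)$ — a statement that requires control at all but finitely many places simultaneously. The paper deduces this from geometric integrality of $\T_U$ (any integral model has $\OO_v$-points for almost all $v$, and then smoothness plus density of $\T_U$ in $\T$ gives adelic density). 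You do invoke geometric integrality in passing, so the gap is expository rather than mathematical, but the openness of $f^\sigma$ is not what does the work here.
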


\begin{proof}
  We must find a rational point $p \in X^\sm(k)$ which approximates the projection of a
  given $(p_v) \in X^\sm(\AAA_k)^{\Br_1(X^\sm)}$ in $X(\AAA_k^S)$.

  By descent theory (see \cite{MR89f:11082},
  \cite[Theorem~3]{MR1666779}), there is a universal torsor $f: \T \to
  X^\sm$ and $(r_v) \in \T(\AAA_k)$ such that $(f(r_v)) = (p_v)$.

  Since $\T_U$ is geometrically integral, any integral model $\frak{T}_U$
  of $\T_U$ satisfies $\frak{T}_U(\OO_v) \ne \emptyset$ for almost all $v
  < \infty_k$. This implies that $\T_U(\AAA_k)$ is dense in $\T(\AAA_k)$ since $\T$
  is smooth and $\T_U$ is open and dense in it.  Therefore, we can find
  $(r_v') \in \T_U(\AAA_k)$ very close to $(r_v)$ in $\T(\AAA_k)$.

  By assumption, we have $\T_U \subset Y^\sm \subset Y$ for some $Y$
  satisfying smooth strong approximation off $S$. Hence we obtain a
  point $r \in Y^\sm(k)$ arbitrarily close to the projection of
  $(r_v') \in \T_U(\AAA_k) \subset Y^\sm(\AAA_k)$ in $Y(\AAA_k^S)$;
  for $r$ close enough, we have $r \in \T_U(k)$.

  Let $p := f(r) \in U(k)$. Since $(r_v')$ is very close to $(r_v)$ in
  $\T(\AAA_k)$, we know that $(p_v'):=(f(r_v'))$ is very close to
  $(p_v)$ in $X^\sm(\AAA_k)$. Since $X^\sm$ is open in $X$, also $(p_v')$ is
  very close to $(p_v)$ with respect to the adelic topology of
  $X(\AAA_k)$. Furthermore, since $r$ is very close to the projection
  of $(r_v')$ in $Y(\AAA_k^S)$, we have $p$ very close to the
  projection of $(p_v') = (g(r_v'))$ in $X(\AAA_k^S)$. Hence $p$ is
  very close to the projection of $(p_v)$ in $X(\AAA_k^S)$.
\end{proof}

\section{Quadratic polynomials represented by quartic norms}

In this section, we apply our Descent Lemma~\ref{lem:passage} to prove
Theorem~\ref{thm:main}. The main work lies in
Proposition~\ref{prop:strong_approximation_torsor} proving strong
approximation for the varieties $Y \subset \AAA^8_k$ defined
by~(\ref{eq:universal_torsors}), which essentially are the universal torsors,
using their fibration over $\AAA^4_k$ whose fibers are three-dimensional
quadrics.

Additionally, we compute the Brauer group of $X$ in
Proposition~\ref{prop:brauer} and obtain results on the integral Hasse
principle.

\begin{proof}[Proof of Theorem~\ref{thm:main}]
  Let $X$ be as in Theorem~\ref{thm:main}. Since $P(t)$ is separable, $X$ is
  smooth over $k$. Let $U := X \cap \{P(t) \ne 0\}$. By the local description
  of its universal torsors \cite[Proposition~2]{arXiv:1202.3567} (see also
  \cite[proof of Proposition~3]{arXiv:1202.3567}), we have
\begin{equation*}
  \T_U \subset \AAA^1_k \times R_{K/k}(\Gm{K})^2
\end{equation*}
defined by
\begin{equation}\label{eq:original_universal_torsors}
  t-\sqrt a=\rho\cdot N_{K/L}(\xx)\cdot \sigma(N_{K/L}(\yy)) \ne 0
\end{equation}
for $k \subset L:=k(\sqrt a) \subset K$, some $(\rho, \xi) \in L^\times
\times K^\times$ satisfying $c N_{L/k}(\rho) = N_{K/k}(\xi)$, and
$\sigma \in \Gamma_k$ with $\sigma(\sqrt a)=-\sqrt a$.  The
restriction of $f : \T \to X$ to $\T_U$ is given by $(t,\xx,\yy)
\mapsto (t,\xi\xx\yy)$.

We write $K=L(\sqrt{u+v\sqrt a})$ with $u,v \in k$. For
$\xx=(x_1+x_2\sqrt a)+(x_3+x_4\sqrt a)\sqrt{u+v\sqrt a} \in K$ and
$\yy \in K$, we have
\begin{align*}
  N_{K/L}(\xx) &= (x_1+x_2\sqrt a)^2-(x_3+x_4\sqrt a)^2(u+v\sqrt a)\\
  &=g_0(\xx)+g_1(\xx)\sqrt a,\\
  \rho \cdot \sigma(N_{K/L}(\yy)) &= \lambda_0(\yy)+\lambda_1(\yy)\sqrt a,
\end{align*}
for quadratic forms
\begin{align*}
  g_0(\xx) &= x_1^2+ax_2^2-ux_3^2-aux_4^2-2avx_3x_4,\\
  g_1(\xx) &= 2x_1x_2-2ux_3x_4-vx_3^2-avx_4^2
\end{align*}
in $\xx=(x_1, \dots, x_4)$ and some quadratic forms $\lambda_0(\yy),
\lambda_1(\yy)$ in $\yy=(y_1, \dots, y_4)$.  Then
\begin{multline*}
  \rho \cdot N_{K/L}(\xx)\cdot \sigma(N_{K/L}(\yy))\\ =
  g_0(\xx)\lambda_0(\yy)+ag_1(\xx)\lambda_1(\yy)+(g_0(\xx)\lambda_1(\yy)+g_1(\xx)\lambda_0(\yy))\sqrt a.
\end{multline*}
This gives an open embedding of $\T_U$ into the affine variety $\widetilde Y
\subset \AAA_k^9$ with coordinates $(t,\xx,\yy)$
defined by
\begin{equation*}
  t=g_0(\xx)\lambda_0(\yy)+ag_1(\xx)\lambda_1(\yy),\quad
  -1=g_0(\xx)\lambda_1(\yy)+g_1(\xx)\lambda_0(\yy).
\end{equation*}
Clearly $\widetilde Y \cong Y$ for $Y \subset \AAA_k^8$ with coordinates
$(\xx,\yy)$ defined by
\begin{equation}\label{eq:universal_torsors}
  -1=g_0(\xx)\lambda_1(\yy)+g_1(\xx)\lambda_0(\yy).
\end{equation}
We have a morphism $g: Y \to X$ defined by
\begin{equation*}
  (\xx,\yy) \mapsto (g_0(\xx)\lambda_0(\yy)+ag_1(\xx)\lambda_1(\yy),\xi\xx\yy).
\end{equation*}
We observe that $g : Y \to X$ and $f: \T \to X$ have the same
restriction to $\T_U$.

Note that $X$ is smooth over $k$. Also $Y$ is smooth over $k$ since
\begin{equation*}
  \Ybar \cong \{2\sqrt a =
  \sigma(\rho) w_1w_2w_3w_4-\rho w_1'w_2'w_3'w_4'\} \subset \AAA^8_\kbar,
\end{equation*}
using (\ref{eq:original_universal_torsors}). By
\cite[Proposition~2]{arXiv:1202.3567}, $\Pic(\Xbar)$ is torsion-free
of finite rank, hence $\T_U$ is a torsor under a torus over the
geometrically integral variety $U$, and hence $\T_U$ is geometrically
integral.

Therefore, we can apply Lemma~\ref{lem:passage} to get strong
approximation with Brauer--Manin obstruction for $X$ off $v_0$ once we
have shown that $Y$ satisfies strong approximation off $v_0$, which is
done in Proposition~\ref{prop:strong_approximation_torsor} below.
\end{proof}

The following result completes the proof of Theorem~\ref{thm:main}.

\begin{prop}\label{prop:strong_approximation_torsor}
  The affine variety $Y \subset \AAA^8_k$ defined by
  (\ref{eq:universal_torsors}) satisfies strong approximation off $v_0$.
\end{prop}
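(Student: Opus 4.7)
The strategy is to fibre $Y$ over $\AAA^4_k$ via the projection $\pi : Y \to \AAA^4_k$, $(\xx,\yy) \mapsto \xx$, and apply the fibration method for strong approximation. For $\xx$ with $(g_0(\xx),g_1(\xx)) \neq (0,0)$, the fibre $\pi^{-1}(\xx)$ is the affine three-dimensional quadric
\[
Q_\xx(\yy) := g_0(\xx)\lambda_1(\yy)+g_1(\xx)\lambda_0(\yy) = -1
\]
in the four variables $\yy$, and for $\xx$ generic this quadric is smooth. First I would carve out the open $V \subset \AAA^4_k$ over which $Q_\xx$ is non-degenerate and check that its complement has codimension $\geq 2$, so that the base behaves like affine $4$-space for the purposes of adelic approximation off $v_0$.

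Given an adelic point $(\xx_v,\yy_v) \in Y(\AAA_k)$ whose projection to $Y(\AAA_k^{v_0})$ is to be approximated, the argument would then proceed in two stages. The first stage descends to the base: apply strong approximation on $\AAA^4_k$ off $v_0$ to select $\xx \in V(k)$ that approximates $(\xx_v)_{v \neq v_0}$ arbitrarily well at the non-$v_0$ places, while at $v_0$ we may choose $\xx$ freely subject to landing in a good archimedean region. The freedom at $v_0$ is exploited to arrange that the quadratic form $Q_\xx$ over $k_{v_0}$ is non-degenerate, indefinite, and represents $-1$; this is precisely the point at which the hypothesis enters the analysis of Theorem~\ref{thm:main}, since via the parametrisation (\ref{eq:original_universal_torsors}) large values of $N_{K/L}(\xx)$ and $N_{K/L}(\yy)$ at $v_0$ correspond to unbounded $t$-values on $X(k_{v_0})$. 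At every other place, local solubility of $Q_\xx(\yy) = -1$ with $\yy$ close to $\yy_v$ is automatic from continuity of $\xx \mapsto Q_\xx$ together with Hensel's lemma applied to the smooth local point $\yy_v$.

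The second stage lifts: for the chosen $\xx$, the fibre $Y_\xx : Q_\xx(\yy) = -1$ is a smooth affine quadric in four variables that is isotropic at $v_0$, and hence a principal homogeneous space under the simply connected semi-simple group $\mathrm{Spin}(Q_\xx)$. By the classical Kneser--Platonov theorem, such a quadric satisfies strong approximation off $v_0$; moreover for smooth affine quadrics in $\geq 4$ variables the algebraic Brauer group reduces to constants, so no Brauer--Manin obstruction intervenes. Applying this to the adelic point $(\yy_v)_{v \neq v_0}$ (now lying on the fibre $Y_\xx$ after the perturbation from the first stage) yields $\yy \in Y_\xx(k)$ approximating $(\yy_v)_{v \neq v_0}$, and $(\xx,\yy) \in Y(k)$ is the rational approximation sought.

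The main obstacle I expect is the first stage: engineering a rational $\xx \in V(k)$ that simultaneously satisfies the finite- and non-$v_0$-archimedean approximation constraints and, at $v_0$, produces a fibre that is non-degenerate, isotropic, and represents $-1$. Carrying this out cleanly will require unravelling the expressions for $g_0,g_1,\lambda_0,\lambda_1$ in terms of $N_{K/L}$ to see how large $\xx$ at $v_0$ (supplied by the unboundedness of $p(X(k_{v_0}))$) forces indefiniteness of $Q_\xx$ over $k_{v_0}$; a secondary technical point is to verify that the locus in $V(k)$ on which all these conditions hold is sufficiently dense adelically off $v_0$ to permit the simultaneous approximation.
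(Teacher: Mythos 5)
Your proposal identifies the correct global strategy — fibre $Y$ over $\AAA^4_k$ with three-dimensional quadric fibres and apply a fibration theorem together with the known strong approximation for quadrics/spin groups — which is exactly what the paper does. However, there are two substantive points of divergence, one technical and one a genuine conceptual gap.

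First, the direction of the projection. The paper projects to the $\yy$-coordinate and views the defining equation as a quadratic form in $\xx$, obtaining the explicit decomposition $q_0(x_1,x_2)+q_1(x_3,x_4)$ with $\disc(q_0)=-N_{L/k}(\rho)N_{K/k}(\yy)$ and $\disc(q_1)=(u^2-av^2)\disc(q_0)$. These clean formulas drive the whole sign analysis. Your proposal projects to $\xx$ and treats the equation as a quadric in $\yy$; an analogous decomposition should exist (the roles of $N_{K/L}(\xx)$ and $\rho\sigma(N_{K/L}(\yy))$ are broadly symmetric), but you would have to carry out that computation from scratch, and you do not.

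Second, and more importantly, your first stage rests on a misreading of strong approximation. Strong approximation for $\AAA^4_k$ off $v_0$ lets you choose $\xx\in k^4$ approximating the given adelic datum at all places $\ne v_0$, but you then have \emph{no control} over the image of $\xx$ in $k_{v_0}^4$: the set of admissible $\xx$ is a translate of a full-rank lattice, so its $v_0$-image is an unbounded discrete set, not a dense one. In particular the claim ``at $v_0$ we may choose $\xx$ freely subject to landing in a good archimedean region'' does not follow. The paper circumvents this by establishing a much stronger statement: for \emph{every} $\yy\in W(k_{v_0})$, the fibre $Y_\yy(k_{v_0})$ is non-compact (i.e.\ the corresponding quadratic form is indefinite). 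That way the uncontrolled $v_0$-coordinate is harmless. Proving this is the technical heart of the proposition: one shows that either $\disc(q_0)<0$ (easy case), or else $a>0$, $u^2-av^2>0$, and then the unboundedness hypothesis on $p(X(k_{v_0}))$ forces $c>0$, which rules out $u<0$; from $u>0$ one deduces that $q_0$ and $q_1$ are definite of opposite signs, hence $q_0+q_1$ is indefinite. Your proposal gestures at this (``large values of $N_{K/L}$ correspond to unbounded $t$-values''), but that is not how the hypothesis is actually used — it enters as a sign constraint on $c$, not by producing large fibres — and the argument is left as a to-do. Since this is precisely the step that makes the proposition true rather than conjectural, the proof as written is incomplete.

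Once the ``all fibres at $v_0$ are non-compact'' statement is in hand, your stages one and two do become essentially the paper's verification of conditions (i)--(iii) of the fibration result of Colliot-Th\'el\`ene and Xu, and the appeal to strong approximation for isotropic quadrics is correct.
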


\begin{proof}
  Consider the projection $\pi: Y \to \AAA_k^4$ to the
  $\yy$-coordinate.  In \eqref{eq:universal_torsors}, we can write
  \begin{equation*}
    g_0(\xx)\lambda_1(\yy)+g_1(\xx)\lambda_0(\yy) = q_0(x_1,x_2)+q_1(x_3,x_4)
  \end{equation*}
  where $q_0,q_1$ are the following binary quadratic forms with coefficients
  in $k[y_1, \dots, y_4]$:
  \begin{align*}
    q_0(x_1,x_2)={}&\lambda_1(\yy)x_1^2+2\lambda_0(\yy)x_1x_2+a\lambda_1(\yy)x_2^2,\\
    q_1(x_3,x_4)={}&-(u\lambda_1(\yy)+v\lambda_0(\yy))x_3^2-2(av\lambda_1(\yy)+u\lambda_0(\yy))x_3x_4\\
    &-a(u\lambda_1(\yy)+v\lambda_0(\yy))x_4^2.
  \end{align*}
  Its discriminants are
  \begin{align*}
    \disc(q_0)&=a\lambda_1(\yy)^2-\lambda_0(\yy)^2=-N_{L/k}(\rho)N_{K/k}(\yy),\\
    \disc(q_1)&=(u^2-av^2)(a\lambda_1(\yy)^2-\lambda_0(\yy)^2) \\&=
    -N_{L/k}(u+v\sqrt a)N_{L/k}(\rho)N_{K/k}(\yy).
  \end{align*}
  For $N_{K/k}(\yy) \ne 0$, the binary forms $q_0,q_1$ have full rank, hence each fiber
  $Y_\yy$ is a three-dimensional quadric.

  Denote
  \begin{equation*}
    Z := \{\lambda_1(\yy)=\disc(q_0) = 0\} \cup
    \{u\lambda_1(\yy)+v\lambda_0(\yy)= \disc(q_1)=0\},
  \end{equation*}
  which is a closed subset of $\AAA_k^4$. For strong approximation, by
  smoothness of $Y$, it is enough to show that its open subset
  $\pi^{-1}(\AAA_k^4\setminus Z)$ satisfies strong approximation off
  $v_0$. Applying the natural fibration $\pi^{-1}(\AAA_k^4\setminus Z)$
  over $\AAA_k^4\setminus Z$, we only need to check the conditions for
  an application of the version of the fibration method stated in
  \cite[Proposition~3.1]{MR3007293}.

  It is clear that all geometric fibers of $\pi^{-1}(\AAA_k^4\setminus Z)$
  over $\AAA_k^4\setminus Z$ are nonempty and
  integral. Recall that $\disc(q_0)$ and $\disc(q_1)$ differ only by a
  constant in $k^\times$. Since $\disc(q_0)$ is irreducible of degree
  $4$ and $\lambda_1(\yy)$ has degree $2$, and similarly for the
  second part, we know that $Z$ has dimension $2$. Therefore
  $\AAA_k^4\setminus Z$ satisfies strong approximation off $v_0$ by
  \cite[Lemma 1.1]{arXiv:1403.1035}, hence the condition (i) holds.

  Let $W:=\{N_{K/k}(\yy)\neq 0\}\subset \AAA_k^4\setminus Z$. In the
  following, we will check the condition (ii) and (iii).

  For $v_0$ as above, we claim that $Y_\yy(k_{v_0})$ is not compact
  for any $\yy \in W(k_{v_0})$, hence the condition (iii) holds. For
  any $k$-point $r$ in $W$, the fiber $Y_r$ is a
  quadric of dimension $3$. Since $Y_r(k_{v_0})$ is not compact,
  $Y_r$ satisfies strong approximation off $v_0$ (see for example
  \cite[Theorem~3.7, \S 5.3]{MR2501421}), hence the condition (ii)
  holds.

  In the following we prove the claim. If $v_0$ is complex, this claim is
  obvious, so we assume that $v_0$ is real and consider everything in
  the following with respect to the corresponding real embedding of
  $k$. If one of $\disc(q_0)$ and $\disc(q_1)$ is negative, then one
  of $q_0$ and $q_1$ is indefinite, and the claim is true. So we can
  assume $\disc(q_0),\disc(q_1)>0$, and we claim that one of $q_0,q_1$
  is positive definite and the other is negative definite, i.e.,
  $\lambda_1(\yy)(-(u\lambda_1(\yy)+v\lambda_0(\yy)))<0$.

  For this, we show first that $u>0$. By assumption,
  $a\lambda_1(\yy)^2-\lambda_0(\yy)^2>0$ (so $a>0$) and $u^2-av^2>0$ (so
  $u-v\sqrt a$ and $u+v\sqrt a$ have the same sign). If $u<0$, then
  both $u-v\sqrt a$ and $u+v\sqrt a$ are negative, so all places of
  $K$ above $v_0$ are complex, and $N_{K/k}$ is positive definite. By
  unboundedness of $p(X(k_{v_0}))$, we have $c>0$. Therefore,
  $N_{L/k}(\rho)=c^{-1}N_{K/k}(\xi)>0$, hence $\disc(q_0)<0$,
  contradicting our assumption of definiteness of $q_0$. Hence $u>0$.

  Therefore, $u^2-av^2>0$ implies $u > |v|\sqrt a$. Furthermore,
  $a\lambda_1(\yy)^2-\lambda_0(\yy)^2>0$ implies $|\lambda_1(\yy)| >
  |\lambda_0(\yy)|/\sqrt a$. Therefore,
  \begin{align*}
    \lambda_1(\yy)(u\lambda_1(\yy)+v\lambda_0(\yy))&>
    (\sqrt a|v|)\frac{|\lambda_0(\yy)|}{\sqrt a}|\lambda_1(\yy)| + v\lambda_0(\yy)\lambda_1(\yy)\\
    &= |v\lambda_0(\yy)\lambda_1(\yy)|+v\lambda_0(\yy)\lambda_1(\yy)\ge 0,
  \end{align*}
  so one of $q_0,q_1$ is positive definite and the other is negative
  definite, and $Y_\yy(k_{v_0})$ is not compact also in this case.
\end{proof}

Our next goal is to compute the Brauer group of $X$ as in
Theorem~\ref{thm:main}. We start with some lemmas that will also be
used in our counterexample in Section~\ref{sec:counterexample}.

Let $X \subset \AAA^{n+1}_k$ be defined by (\ref{eq:norm}) for $s=1$, a finite
extension $K/k$ of fields of characteristic $0$ and a non-constant
polynomial $P(t) \in k[t]$. Recall that $X^\sm \subset X$ is the smooth locus
of $X$. If $P(t)$ is separable, then $X^\sm = X$.

The following lemma shows that $\Br(\Xbar^\sm)=0$.

\begin{lemma}\label{lem:transcendental_brauer}
  Let $k$ be an algebraically closed field of characteristic
  $0$. Let $P(t) = c(t-a_1)^{e_1} \cdots (t-a_r)^{e_r} \in k[t]$ with $r\ge 1$,
  $\gcd(e_1, \dots, e_r) = 1$ and $a_i \ne a_j$ for $i \ne j$. Let $X
  \subset \AAA^{n+1}$ be the affine variety defined by $P(t)=z_1\cdots
  z_n$. Then $\Br(X^\sm)=0$.
\end{lemma}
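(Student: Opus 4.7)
My plan is to apply Colliot-Th\'el\`ene--Sansuc descent to an explicit universal torsor, reducing the vanishing $\Br(X^\sm) = 0$ to the Brauer-group vanishing on this torsor. A direct Jacobian calculation shows that $X^{\mathrm{sing}} = \{P'(t) = 0\} \cap \{\text{at least two } z_j \text{ vanish}\}$, of codimension $2$ in $X$; in particular, $X$ is normal and $X^\sm$ is the complement of a codimension-$2$ subset. Let $Z_j^{(i)} := \overline{\{t = a_i,\, z_j = 0\}}$ denote the prime divisors of $X^\sm$ lying over the roots of $P$; a local analysis at their generic points yields $\divi(t - a_i) = \sum_j Z_j^{(i)}$ and $\divi(z_j) = \sum_i e_i Z_j^{(i)}$. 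Using the open set $U := X \cap \{P(t) \neq 0\} \cong (\AAA^1 \setminus \{a_1, \dots, a_r\}) \times \GmZ^{n-1}$, whose unit group is generated by $\kbar^\times$, the $t - a_i$, and the $z_j$, I deduce $\kbar[X^\sm]^\times = \kbar^\times$ and present $\Pic(X^\sm)$ as $\bigoplus_{i, j} \ZZ \cdot Z_j^{(i)}$ modulo the above divisor relations; the hypothesis $\gcd(e_1, \dots, e_r) = 1$ is exactly what ensures this cokernel is torsion-free, yielding $\Pic(X^\sm)$ free of rank $(r - 1)(n - 1)$.

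Next I construct a universal torsor explicitly. Let $\T_0 \subset \AAA^{rn}$, with coordinates $u_{j, i}$, be defined by the $r - 1$ equations $\prod_j u_{j, 1} - \prod_j u_{j, i} = a_i - a_1$ for $i = 2, \dots, r$, with morphism to $X$ sending $(u_{j, i})$ to $t = a_1 + \prod_j u_{j, 1}$ and $z_j = \prod_i u_{j, i}^{e_i}$. The crucial observation for the Jacobian check is that the vanishing of any coordinate $u_{j_0, i_0}$ on $\T_0$ forces $t = a_{i_0}$, so at most one column $(u_{\bullet, i})$ can contain a zero at any given point; this rules out the configurations that would give a Jacobian of insufficient rank, so $\T_0$ is smooth. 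Taking $\T$ to be the preimage of $X^\sm$ in $\T_0$, one verifies that $\T$ is open in $\T_0$ with complement of codimension $\geq 2$, and that the natural action of the torus $T := \Hom(\Pic(X^\sm), \GmZ) \cong \GmZ^{(r-1)(n-1)}$ on $\T$ is free, making $\T \to X^\sm$ a universal torsor.

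Over the algebraically closed base $\kbar$, the Leray-type descent sequence for the $T$-torsor $\T \to X^\sm$ yields an injection $\Br(X^\sm) \hookrightarrow \Br(\T)$, since $\Pic(T) = 0$ kills the relevant obstruction; and purity along the codimension-$\geq 2$ complement $\T_0 \setminus \T$ gives $\Br(\T) = \Br(\T_0)$, reducing the proof to showing $\Br(\T_0) = 0$. This is the main technical obstacle I foresee. I would view $\T_0$ as the $r$-fold fiber product over $\AAA^1_t$ of copies of the smooth affine hypersurface $\{\prod_j u_j = t - a_i\} \cong \AAA^n$, and iterate a fibration argument: the generic fiber over $V := \AAA^1 \setminus \{a_1, \dots, a_r\}$ is a torus $\GmZ^{r(n-1)}$ whose potential Brauer classes are cyclic algebras in the coordinates, and one shows that every such class ramifies at the degenerate fibers over the $a_i$ and hence cannot extend to $\Br(\T_0)$. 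In the smallest case $r = n = 2$, reassuringly, $\T_0 \cong SL_2$ and the vanishing $\Br(SL_2) = 0$ is classical.
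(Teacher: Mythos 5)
Your approach is genuinely different from the paper's. The paper proves the lemma by a short induction on $n$: it projects $X$ to the coordinate $z_n$, passes to the generic fiber $X^\sm_\eta$ over $k(z_n)$, and uses the Hochschild--Serre spectral sequence together with Tsen's theorem and the vanishing $H^1(k(z_n), \Pic(X^\sm_{\overline\eta}))=0$ (the latter is where $\gcd(e_1,\dots,e_r)=1$ enters, making $\Pic$ a constant torsion-free module). You instead propose to compute $\Pic(X^\sm)$, build an explicit universal torsor $\T\subset\T_0\subset\AAA^{rn}$, pull back along $\T\to X^\sm$, and reduce to $\Br(\T_0)=0$.

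The preparatory steps of your argument are essentially sound: the description of $X^{\mathrm{sing}}$, the computation of $\kbar[X^\sm]^\times$ and $\Pic(X^\sm)$, the divisor relations, and the role of $\gcd(e_i)=1$ in torsion-freeness all check out (modulo a harmless missing constant $c$ in the torsor equations). The injectivity $\Br(X^\sm)\hookrightarrow\Br(\T)$ also holds, most cleanly because a torsor under a split torus is Zariski-locally trivial, so local sections split $\pi^*$; and purity gives $\Br(\T)=\Br(\T_0)$ once $\T_0$ is checked to be smooth.

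The genuine gap is the final and central claim, $\Br(\T_0)=0$, which you flag yourself as the main technical obstacle but do not prove. The sketch offered---fiber $\T_0$ over $\AAA^1_t$, note the generic fiber is a torus $\GG_{\mathrm m}^{r(n-1)}$ over $k(t)$, and assert that every potential Brauer class (coming from cyclic algebras in the coordinates) ramifies over the $a_i$---is exactly the kind of statement that needs a careful argument, not an assertion: over $k(t)$ (transcendence degree $1$ over $\kbar$), $\Br(\GG_{\mathrm m}^{m})$ is nontrivial, so there really are candidate classes, and ruling them all out is the content of the lemma. Worse, the proposed route is circular in flavor: $\T_0$ is cut out by $r-1$ equations of the same multiplicative type as the single equation defining $X$, and the fibration-over-$\AAA^1$-plus-residues strategy you suggest for $\T_0$ is precisely the strategy the paper applies directly to $X$, without ever invoking torsors. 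So the detour through the universal torsor replaces the original problem by one of comparable difficulty and leaves it unsolved; verifying it in the single case $r=n=2$ (where $\T_0\cong SL_2$) does not cover the general case. To make this approach work you would need to supply an actual proof of $\Br(\T_0)=0$, for instance by reproducing the paper's Tsen/Hochschild--Serre induction on $\T_0$---at which point the torsor machinery has bought you nothing.
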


\begin{proof}
  We prove the claim by induction on $n$. If $n=1$, then $X^\sm = X
  \cong \AAA^1_k$. By Tsen's theorem, $\Br(X^\sm)=0$.

  For $n > 1$, consider the projection
  \begin{equation*}
    X \to \AAA^1_k,\quad (t,z_1, \dots, z_n) \mapsto z_n.
  \end{equation*}
  Let $\eta$ be the generic point of $\AAA^1_k$. Then the generic fiber
  $X^\sm_\eta$ is just the smooth locus of the affine variety over
  $k(z_n)$ defined by
  \begin{equation*}
    \frac{1}{z_n}P(t) = z_1\cdots z_{n-1}.
  \end{equation*}
  We have $\Br(X^\sm) \subset \Br(X^\sm_\eta)$ for any smooth variety,
  and we will show that the latter is trivial.

  Let $X^\sm_{\overline\eta} = X^\sm_\eta \times_{k(z_n)}
  \overline{k(z_n)}$.  Since $k[X^\sm_{\overline\eta}]^\times =
  \overline{k(z_n)}^\times$ by the same residue computation as in the
  proof of \cite[Proposition~2]{arXiv:1202.3567}, we have $H^2(k(z_n),
  k[X_{\overline\eta}]^\times) = \Br(k(z_n))$. Therefore, the Hochschild--Serre
  spectral sequence gives the exact sequence
  \begin{equation*}
    \Br(k(z_n)) \to \ker\left(\Br(X^\sm_\eta) \to \Br(X^\sm_{\overline\eta})\right)
    \to H^1(k(z_n), \Pic(X^\sm_{\overline\eta})).
  \end{equation*}
  By Tsen's theorem, $\Br(k(z_n))
  = 0$ since $k=\kbar$. By the induction hypothesis,
  $\Br(X^\sm_{\overline\eta}) = 0$.

  Since $\gcd(e_1, \dots, e_r)=1$, the Picard group
  $\Pic(X^\sm_{\overline\eta})$ is a constant torsion-free module
  (i.e., the natural action of $\Gamma_{k(z_n)}$ on it is trivial),
  hence we have $H^1(k(z_n), \Pic(X^\sm_{\overline\eta}))=0$.
  Therefore, the exact sequence above gives $\Br(X^\sm_\eta)=0$.
\end{proof}

For $X$ as above, let $U \subset X^\sm$ be the open subset defined by
$P(t) \ne 0$. Denote $\That = \kbar[U]^\times / \kbar^\times$ and
$\Mhat = \Div_{\Xbar\setminus \Ubar}(\Xbar)$. We have the natural exact sequence
\begin{equation}\label{eq:seq_gal_modules}
  0 \to \That \to \Mhat \to \Pic(\Xbar^\sm) \to 0.
\end{equation}

\begin{lemma}\label{lem:Br_formula}
  Let $k$ be a field of characteristic $0$ with
  $H^3(k,\kbar^\times)=0$. Assume that $P(t)=c g_1(t)^{e_1}\cdots
  g_r(t)^{e_r}$ for $c \in k^\times$, pairwise distinct irreducible
  monic polynomials $g_1(t), \dots, g_r(t) \in k[t]$ and $e_1, \dots,
  e_r$ positive integers with $\gcd(e_1, \dots, e_r)=1$. Let $X$ be
  defined by (\ref{eq:norm}) with $s=1$. Then
  \begin{equation*}
    \Br(X^\sm)/\Br_0(X^\sm) \cong H^1(k,\Pic(\Xbar^\sm)) \cong
    \ker\left(H^2(k,\That) \to H^2(k,\Mhat)\right).
  \end{equation*}
\end{lemma}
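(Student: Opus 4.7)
The plan is to split the statement into two isomorphisms, each obtained by a long exact sequence argument.

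For the first isomorphism $\Br(X^\sm)/\Br_0(X^\sm) \cong H^1(k,\Pic(\Xbar^\sm))$, I would invoke the Hochschild--Serre spectral sequence $H^p(k, H^q_{\text{\'et}}(\Xbar^\sm,\GmZ)) \Rightarrow H^{p+q}_{\text{\'et}}(X^\sm,\GmZ)$. The first step is to show $\kbar[X^\sm]^\times = \kbar^\times$: this is the residue computation already invoked in the proof of Lemma~\ref{lem:transcendental_brauer}, and it uses $\gcd(e_1,\dots,e_r)=1$ in an essential way (a nontrivial unit would produce a nontrivial common divisor of the multiplicities of the roots of $P$ over $\kbar$). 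Combined with the hypothesis $H^3(k,\kbar^\times)=0$, the low-degree exact sequence of Hochschild--Serre becomes
\begin{equation*}
  \Br(k) \to \ker\bigl(\Br(X^\sm) \to \Br(\Xbar^\sm)\bigr) \to H^1(k,\Pic(\Xbar^\sm)) \to 0.
\end{equation*}
By Lemma~\ref{lem:transcendental_brauer}, $\Br(\Xbar^\sm)=0$, so the kernel is all of $\Br(X^\sm)=\Br_1(X^\sm)$, and dividing by the image of $\Br(k)$ (which is $\Br_0(X^\sm)$ by definition) yields the first isomorphism.

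For the second isomorphism, I would apply Galois cohomology to the short exact sequence (\ref{eq:seq_gal_modules}), obtaining
\begin{equation*}
  H^1(k,\Mhat) \to H^1(k,\Pic(\Xbar^\sm)) \to H^2(k,\That) \to H^2(k,\Mhat).
\end{equation*}
The key point is that $\Mhat = \Div_{\Xbar\setminus\Ubar}(\Xbar)$ is a permutation $\Gamma_k$-module: its basis consists of the irreducible components of $\Xbar\setminus\Ubar$, which are permuted by $\Gamma_k$, so $\Mhat$ decomposes as a direct sum of modules of the form $\ZZ[\Gamma_k/\Gamma_{k'}]$ for finite extensions $k'/k$. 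By Shapiro's lemma, $H^1(k,\ZZ[\Gamma_k/\Gamma_{k'}]) = H^1(k',\ZZ) = \Hom_{\text{cont}}(\Gamma_{k'},\ZZ) = 0$, so $H^1(k,\Mhat)=0$. Hence $H^1(k,\Pic(\Xbar^\sm))$ injects into $H^2(k,\That)$ with image precisely the kernel of $H^2(k,\That)\to H^2(k,\Mhat)$, giving the second isomorphism.

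The main subtlety lies in the first step, namely verifying $\kbar[X^\sm]^\times = \kbar^\times$ rigorously under the assumption $\gcd(e_i)=1$. Once this is in hand, both isomorphisms follow mechanically from standard long exact sequences together with the two ingredients already established, namely Lemma~\ref{lem:transcendental_brauer} and the permutation module structure of $\Mhat$. No further geometric input about $X$ is required.
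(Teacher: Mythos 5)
Your proposal is correct and follows exactly the paper's proof: the first isomorphism via the Hochschild--Serre spectral sequence (using $\Br(\Xbar^\sm)=0$ from Lemma~\ref{lem:transcendental_brauer} and $H^3(k,\kbar^\times)=0$), the second via the long exact cohomology sequence of (\ref{eq:seq_gal_modules}) together with $H^1(k,\Mhat)=0$ because $\Mhat$ is a permutation module. One small misattribution in your side remark: the hypothesis $\gcd(e_1,\dots,e_r)=1$ is not actually what forces $\kbar[X^\sm]^\times=\kbar^\times$ --- a direct divisor computation shows that a monomial $\prod_{i,l}(t-a_{i,l})^{c_{i,l}}\prod_j u_j^{b_j}$ has trivial divisor on $X^\sm$ only if it is constant, with no gcd assumption; rather, the gcd condition enters in Lemma~\ref{lem:transcendental_brauer}, where it guarantees $\Pic(X^\sm_{\overline\eta})$ is a torsion-free (hence constant) Galois module in the inductive step, which is what kills the $H^1$ term and ultimately yields $\Br(\Xbar^\sm)=0$.
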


\begin{proof}
  By Lemma~\ref{lem:transcendental_brauer}, we have
  $\Br(\Xbar^\sm)=0$, hence $\Br(X^\sm) = \Br_1(X^\sm)$. Note that
  $\kbar[X^\sm]^\times = \kbar^\times$.

  By the Hochschild--Serre spectral sequence, we have the exact sequence
  \begin{equation*}
    \Br(k) \to \Br_1(X^\sm) \to H^1(k,\Pic(\Xbar^\sm)) \to H^3(k,\kbar^\times)=0,
  \end{equation*}
  hence $\Br(X^\sm)/\Br_0(X^\sm) \cong H^1(k,\Pic(\Xbar^\sm))$.

  Since $\Mhat$ is a permutation $\Gamma_k$-module,
  $H^1(k,\Mhat)=0$. Hence (\ref{eq:seq_gal_modules}) implies that
  \begin{equation*}
    0 \to H^1(k, \Pic(\Xbar^\sm)) \to H^2(k, \That) \to H^2(k,\Mhat)
  \end{equation*}
  is exact, and the result follows.
\end{proof}

To compute the Brauer group in the situation of
Theorem~\ref{thm:main}, we argue similarly as in
Lemma~\ref{lem:Br_formula}. The condition $H^3(k,\kbar^\times)=0$
holds when $k$ is a number field, for example.

\begin{prop}\label{prop:brauer}
  Let $k$ be a field of characteristic $0$ with
  $H^3(k,\kbar^\times)=0$. Let $K/k$ be of degree $4$ and $P(t) =
  c(t^2-a)$ irreducible over $k$ and split over $K$. Let $X$ be
  defined by (\ref{eq:norm}) with $s=1$.

  We have
  \begin{equation*}
    \Br(X)/\Br_0(X) =
    \begin{cases}
      0, &\text{if $K/k$ cyclic or non-Galois,}\\
      \ZZ/2\ZZ, &\text{otherwise.}
    \end{cases}
  \end{equation*}
\end{prop}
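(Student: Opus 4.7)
Since $P(t)=c(t^2-a)$ is separable, $X=X^\sm$ and $\Br(X)=\Br(X^\sm)$. The plan is to apply Lemma~\ref{lem:Br_formula} with $r=1$, $g_1(t)=t^2-a$, $e_1=1$, which reduces the task to computing
\[
\Br(X)/\Br_0(X)\ =\ H^1(k,\Pic(\Xbar))\ =\ \ker\bigl(H^2(k,\That)\to H^2(k,\Mhat)\bigr),
\]
then to describe $\Pic(\Xbar)$ explicitly as a Galois module, and finally to compute $H^1$ case by case.

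For the Galois-module structure, I would argue as follows. Over $\kbar$ one has $P(t)=c(t-\sqrt a)(t+\sqrt a)$ and $N_{K/k}(\zz)=\prod_{i=1}^{4}\sigma_i(\zz)$ for the four embeddings $\sigma_i:K\hookrightarrow\kbar$. On $\Ubar:=X\cap\{P(t)\ne 0\}$, the units modulo constants are generated by $t\pm\sqrt a$ and the $\sigma_i(\zz)$ subject to the single relation $c(t-\sqrt a)(t+\sqrt a)=\prod_i\sigma_i(\zz)$, so $\That$ is free of rank~$5$. The complement $\Xbar\setminus\Ubar$ consists of the eight prime divisors $D_i^{\pm}:=\{t=\pm\sqrt a,\ \sigma_i(\zz)=0\}$, so $\Mhat\cong\ZZ^8$; a short computation of orders of vanishing gives $\divi(\sigma_i(\zz))=D_i^{+}+D_i^{-}$ and $\divi(t\mp\sqrt a)=\sum_i D_i^{\pm}$. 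Taking the cokernel identifies $\Pic(\Xbar)\cong\ZZ^4/\ZZ(1,1,1,1)$ with basis the classes of $D_1^{+},\dots,D_4^{+}$, and the Galois action is a signed permutation: $g\cdot D_i^{+}=\varepsilon(g)\,D_{g\sigma_i}^{+}$ in $\Pic(\Xbar)$, where $\varepsilon(g)=+1$ if $g$ fixes $\sqrt a$ and $-1$ otherwise (using $D_i^{-}\equiv-D_i^{+}$ in $\Pic$).

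Since $\Pic(\Xbar)$ is torsion-free, inflation--restriction identifies $H^1(k,\Pic(\Xbar))$ with $H^1(G,\Pic(\Xbar))$ for $G=\Gal(\widetilde K/k)$, the Galois closure. There are exactly three possibilities: $G=\ZZ/4$ (cyclic), $G=V_4$ (bi-quadratic Galois), or $G=D_4$ (non-Galois); indeed, $|G|=8$ with $V_4=\Gal(\widetilde K/L)$ as a normal subgroup, and the abelian possibilities $\ZZ/4\times\ZZ/2$ and $(\ZZ/2)^3$ would make $K/k$ Galois while $Q_8$ has no $V_4$ subgroup. In the cyclic case $G=\langle\tau\rangle$, a direct matrix computation shows that $\ker N_\tau$ and $\mathrm{Im}(1-\tau)$ coincide as the same rank-$2$ subgroup of $\Pic(\Xbar)$, so $H^1=0$. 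In the bi-quadratic case, write $V_4=\langle\alpha,\beta\rangle$ with $\alpha$ fixing $\sqrt a$ (unsigned action on $\Pic$) and $\beta$ flipping it (signed action); parameterising cocycles $(c(\alpha),c(\beta))$ by the relations $(1+\alpha)c(\alpha)=(1+\beta)c(\beta)=0$ and $(1-\beta)c(\alpha)=(1-\alpha)c(\beta)$ and dividing by the coboundaries $((\alpha-1)m,(\beta-1)m)$ yields an explicit quotient isomorphic to $\ZZ/2\ZZ$. In the non-Galois case $G=D_4=\langle r,s\mid r^4=s^2=1,\ srs=r^{-1}\rangle$, the crucial observation is that the normal cyclic subgroup $\langle r\rangle$ acts on $\Pic(\Xbar)$ by exactly the same formulas as $\langle\tau\rangle$ did in the cyclic case, so $H^1(\langle r\rangle,\Pic(\Xbar))=0$ by that calculation; the Hochschild--Serre sequence for $\langle r\rangle\triangleleft D_4$ then reduces everything to $H^1(D_4/\langle r\rangle,\Pic(\Xbar)^{\langle r\rangle})$, and one checks directly that $\Pic(\Xbar)^{\langle r\rangle}\cong\ZZ$ with trivial residual action of $D_4/\langle r\rangle$, so this cohomology vanishes as well.

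The main technical step is the $V_4$-cocycle calculation in the bi-quadratic case, which is the only source of a non-trivial class. In the dihedral case one might naively expect a contribution from the $V_4=\Gal(\widetilde K/L)\subset D_4$, but the elements of this particular $V_4$ all fix $\sqrt a$ and therefore act on $\Pic(\Xbar)$ by unsigned permutations only; this is structurally a different $V_4$-action from the one in the bi-quadratic case, which is precisely what allows $H^1(D_4,\Pic(\Xbar))$ to vanish despite $D_4$ containing a copy of $V_4$.
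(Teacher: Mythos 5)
Your proposal is correct and takes a genuinely different route from the paper. After applying Lemma~\ref{lem:Br_formula}, the paper's proof splits into a Galois case (working over $K$, using the sequence (\ref{eq:seq_That_Mhat_Pic_K}), Shapiro's lemma and the reduction to $H^3(K/k,\ZZ)$, where Schur's computation enters) and a non-Galois case (inflation--restriction through $L=k(\sqrt a)$, with a separate explicit study of $H^1(L,\Pic(\Xbar))$). You instead work uniformly over the Galois closure $\widetilde K$, write down $\Pic(\Xbar)\cong\ZZ^4/\ZZ(1,1,1,1)$ as a concrete signed-permutation module for $G=\Gal(\widetilde K/k)$, and compute $H^1(G,-)$ directly in the three possible cases $\ZZ/4$, $V_4$, $D_4$. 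The gain is conceptual economy: the cyclic computation is reused verbatim for the normal subgroup $\langle r\rangle\lhd D_4$ (since $r$ is again a $4$-cycle with sign $-1$), and the $D_4$ case then collapses via inflation--restriction for $\langle r\rangle$ to $H^1(\ZZ/2,\ZZ^{\mathrm{triv}})=0$ — a slicker route than the paper's two-step argument through $L$. I checked the module description (in $\Pic(\Xbar)$ one has $D_i^-=-D_i^+$ and $\sum_i D_i^+=0$, giving the signed permutation action with the sign governed by whether $\sqrt a$ is fixed), and all three cohomology computations come out as you claim; e.g.\ using $0\to\ZZ^\chi\to\ZZ^4\to\Pic(\Xbar)\to0$ with $\ZZ^4\cong\ZZ[G]$ for $G=\ZZ/4$ or $V_4$ one gets $H^1(G,\Pic(\Xbar))\cong H^2(G,\ZZ^\chi)$, which is $0$ resp.\ $\ZZ/2$, confirming your parenthetical cocycle calculations.

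One small gap to patch: you assert "$|G|=8$" and $V_4=\Gal(\widetilde K/L)$ in the non-Galois case without argument. That is correct but requires ruling out $A_4$ and $S_4$: both are transitive on $4$ points, but $A_4$ has no index-$2$ subgroup (so no candidate for $\Gamma_L/\Gamma_{\widetilde K}$), and the unique index-$2$ subgroup $A_4\subset S_4$ fails to contain a point stabilizer $S_3$, so neither can accommodate the quadratic subfield $L\subset K$. Once that is said, the only non-Galois transitive subgroup of $S_4$ with a point stabilizer inside an index-$2$ subgroup is $D_4$, and your argument goes through.
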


\begin{proof}
  First, suppose that $K/k$ is Galois. Since $\Pic(\Xbar)$ is split by $K$ and torsion-free, we have
  \begin{equation*}
    H^1(k, \Pic(\Xbar)) \cong H^1(K/k, \Pic(X_K))
  \end{equation*}
  by the inflation-restriction sequence.

  Let $L := k(\sqrt a)$. Note that $\Div_{X_K \setminus U_K} (X_K)
  \cong \ZZ[L/k] \otimes \ZZ[K/k]$, hence $H^i(K/k, \Div_{X_K
    \setminus U_K} (X_K)) = 0$ for $i>0$.  With the exact sequence
  \begin{equation}\label{eq:seq_That_Mhat_Pic_K}
    0 \to K[U]^\times/K^\times \to \Div_{X_K \setminus U_K} (X_K) \to \Pic(X_K) \to 0,
  \end{equation}
  this gives $H^1(K/k, \Pic(X_K)) \cong H^2(K/k,
  K[U]^\times/K^\times)$.

  To compute the latter, we consider the exact sequence
  \begin{equation}\label{eq:That_K_over_k}
    0 \to \ZZ \to \ZZ[L/k] \oplus \ZZ[K/k] \to K[U]^\times/K^\times \to 0
  \end{equation}
  from \cite[Proposition~2]{arXiv:1202.3567}. We consider
  \begin{multline*}
    H^2(K/k,\ZZ) \to H^2(K/k, \ZZ[L/k] \oplus \ZZ[K/k]) \to H^2(K/k, K[U]^\times/K^\times)\\ \to  H^3(K/k,\ZZ) \to H^3(K/k, \ZZ[L/k] \oplus \ZZ[K/k]).
  \end{multline*}

  We have $H^2(K/k,\ZZ[K/k])=0$ since this is an induced module, and
  $H^2(K/k,\ZZ[L/k]) \cong H^2(K/L,\ZZ)$ by Shapiro's lemma. Hence the
  first map is the natural map $H^2(K/k,\ZZ) \to H^2(K/L,\ZZ)$, which
  is surjective since any quartic Galois extension $K/k$ is abelian.

  Similarly, $H^3(K/k,\ZZ[K/k])=0$ and $H^3(K/k, \ZZ[L/k])
  \cong H^3(K/L,\ZZ)$. Since $\Gal(K/L)$ is cyclic,
  \begin{equation*}
    H^3(K/L,\ZZ) \cong H^1(K/L,\ZZ) \cong \Hom(\Gal(K/L),\ZZ) = 0.
  \end{equation*}
  Therefore, $H^3(K/k, \ZZ[L/k] \oplus \ZZ[K/k]) = 0$.

  Therefore, the third map in our part of the long exact sequence
  above is an isomorphism. Now
  \begin{equation*}
    H^3(K/k,\ZZ) =
    \begin{cases}
      0, &\text{if $K/k$ is cyclic,}\\
      \ZZ/2\ZZ, &\text{if $\Gal(K/k) \cong \ZZ/2\ZZ \times \ZZ/2\ZZ$.}
    \end{cases}
  \end{equation*}
  Indeed, in the first case, we argue as in the previous paragraph. In
  the second case, we can refer to a classical computation of Schur
  (see \cite[Corollary~2.2.12]{MR1200015}).

  Next, suppose that $K/k$ is non-Galois. By the inflation-restriction sequence, we have
  \begin{equation}\label{eq:inf_res}
    0 \to H^1(L/k, \Pic(\Xbar)^{\Gamma_L}) \to H^1(k,\Pic(\Xbar)) \to H^1(L,\Pic(\Xbar)).
  \end{equation}
  We will show that the first and third groups are trivial.

  For the triviality of $H^1(L/k, \Pic(\Xbar)^{\Gamma_L})$, we
  note that (\ref{eq:seq_gal_modules}) gives
  \begin{equation}\label{eq:seq_L}
    0 \to \That^{\Gamma_L} \to \Mhat^{\Gamma_L} \to \Pic(\Xbar)^{\Gamma_L} \to H^1(\Gamma_L, \That).
  \end{equation}
  Analogously to (\ref{eq:That_K_over_k}), we have over $\kbar$ the exact sequence
  \begin{equation}\label{eq:That_kbar_over_k}
    0 \to \ZZ \to \ZZ[L/k] \oplus \ZZ[K/k] \to \That \to 0.
  \end{equation}
  Since the action of $\Gamma_L$ on $\ZZ[L/k]$ is trivial, this
  exact sequence has a $\Gamma_L$-equivariant splitting, hence
  $\That$ is a permutation $\Gamma_L$-module, so
  $H^1(L, \That) = 0$.

  Therefore, (\ref{eq:seq_L}) is a short exact sequence, giving
  \begin{multline*}
    H^1(L/k,\Mhat^{\Gamma_L}) \to H^1(L/k, \Pic(\Xbar)^{\Gamma_L})\\ \to H^2(L/k, \That^{\Gamma_L}) \to H^2(L/k, \Mhat^{\Gamma_L}).
  \end{multline*}
  We note that $\Mhat \cong \ZZ[L/k] \otimes \ZZ[K/k]$, hence
  $\Mhat^{\Gamma_L} \cong \ZZ[L/k] \otimes
  \ZZ[K/k]^{\Gamma_L}$ is an induced
  $\Gal(L/k)$-module. Therefore, $H^i(L/k,\Mhat^{\Gamma_L})=0$
  for $i>0$, hence the second map in our part of the long exact
  sequence is an isomorphism.

  We have $\ZZ[K/k]^{\Gamma_L} \cong \ZZ[L/k]$. We choose $\sigma \in
  \Gamma_k$ and $\beta \in \Gamma_L$ such that $\Gamma_k/\Gamma_L =
  \{\Gamma_L, \sigma\Gamma_L\}$ and $\Gamma_L/\Gamma_K = \{\Gamma_K,
  \beta\Gamma_K\}$. Then
  \begin{equation*}
    \Gamma_k/\Gamma_K =
    \{\Gamma_K,\sigma\Gamma_K, \beta\Gamma_K,
    \sigma\beta\Gamma_K\}.
  \end{equation*}
  Now the orbits of the natural action of $\Gamma_L = \Gamma_K \cup
  \beta\Gamma_K$ on this are $\{\Gamma_K,\beta\Gamma_K\}$ (since
  $\Gamma_K$ is normal in $\Gamma_L$ because of $[K:L]=2$) and
  $\{\sigma\Gamma_K, \sigma\beta\Gamma_K\}$ (since the action of
  $\Gamma_L$ on this set is non-trivial because
  $\sigma\beta\sigma^{-1}$ maps $\sigma\Gamma_K$ to
  $\sigma\beta\Gamma_K$ and lies in $\Gamma_L$ because $\Gamma_L$ is
  normal in $\Gamma_k$ since $[L:k]=2$). Therefore,
  $\ZZ[K/k]^{\Gamma_L} = \langle \Gamma_K+\beta\Gamma_K,
  \sigma\Gamma_K+\sigma\beta\Gamma_K\rangle$ is of rank $2$, with a
  non-trivial action of $\Gal(L/k)$.

  Taking $\Gamma_L$-invariants of (\ref{eq:That_kbar_over_k}) gives
  \begin{equation*}
    0 \to \ZZ \to \ZZ[L/k] \oplus \ZZ[L/k] \to \That^{\Gamma_L} \to H^1(L,\ZZ)=0.
  \end{equation*}
  The long exact sequence gives
  \begin{equation*}
    0=H^2(L/k, \ZZ[L/k] \oplus \ZZ[L/k]) \to H^2(L/k, \That^{\Gamma_L}) \to H^3(L/k,\ZZ)=0,
  \end{equation*}
  where the latter is trivial because $L/k$ is cyclic. Hence $H^2(L/k,
  \That^{\Gamma_L})=0$, which implies $H^1(L/k,
  \Pic(\Xbar)^{\Gamma_L})=0$ in (\ref{eq:inf_res}).

  For the triviality of $H^1(L,\Pic(\Xbar))$ in (\ref{eq:inf_res}), we
  write $K = k(\sqrt{u+v\sqrt a})$ for some $u \in k$ and $v \in
  k^\times$. Then $K\otimes_k L \cong K \oplus K'$ with $K' =
  k(\sqrt{u-v\sqrt a})$.  We note that $X \times_k L$ is defined by
  the equation
  \begin{equation*}
    c(t-\sqrt a)(t+\sqrt a) = N_{K/L}(\zz_1)N_{K'/L}(\zz_2).
  \end{equation*}
  Over $\kbar$, we have $N_{K/L}(\zz_1) = u_1u_2$ and $N_{K'/L}(\zz_2) = u_3u_4$.

  As a sequence of $\Gamma_L$-modules, (\ref{eq:That_kbar_over_k})
  becomes
  \begin{equation*}
    0 \to \ZZ \to \ZZ^2 \oplus \ZZ[K/L] \oplus \ZZ[K'/L] \to \That \to 0,
  \end{equation*}
  hence $\That \cong \ZZ \oplus \ZZ[K/L] \oplus \ZZ[K'/L]$, with basis
  $(t-\sqrt a,u_1,u_2,u_3,u_4)$. Therefore, $H^2(L,\That) \cong
  H^2(L,\ZZ) \oplus H^2(K,\ZZ) \oplus H^2(K',\ZZ)$ by Shapiro's lemma.

  Then
  \begin{equation*}
    D_i^+ = \{t=\sqrt a, u_i=0\},\quad D_i^- = \{t=-\sqrt a, u_i=0\}
  \end{equation*}
  for $i=1, \dots, 4$ are a basis of $\Mhat$. Considering the action of
  $\Gamma_L$ on this basis shows that
  \begin{align*}
    &\ZZ D_1^+ \oplus \ZZ D_2^+ \cong \ZZ D_1^- \oplus \ZZ D_2^- \cong \ZZ[K/L],\\
    &\ZZ D_3^+ \oplus \ZZ D_4^+ \cong \ZZ D_3^- \oplus \ZZ D_4^- \cong \ZZ[K'/L],
  \end{align*}
  hence $\Mhat \cong (\ZZ[K/L] \oplus \ZZ[K'/L])^2$. Therefore,
  $H^2(L,\Mhat) \cong (H^2(K,\ZZ) \oplus H^2(K',\ZZ))^2$, again by Shapiro's lemma.

  The map
  \begin{equation*}
    \That \to \Mhat, \quad t-\sqrt a\mapsto D_1^++D_2^++D_3^++D_4^+,\ u_i \mapsto D_i^++D_i^-
  \end{equation*}
  from (\ref{eq:seq_gal_modules}) induces a map $H^2(L,\That) \to
  H^2(L, \Mhat)$ that can be explicitly described as
  \begin{equation*}
    (\chi_1,\chi_2,\chi_3) \mapsto (\chi_{1|K} + \chi_2, \chi_{1|K'}+\chi_3,\chi_2,\chi_3).
  \end{equation*}

  The sequence (\ref{eq:seq_gal_modules}) gives
  \begin{equation*}
    0=H^1(L,\Mhat) \to H^1(L,\Pic(\Xbar)) \to H^2(L,\That) \xrightarrow{\phi} H^2(L,\Mhat)
  \end{equation*}
  since $\Mhat$ is a permutation $\Gamma_L$-module. To deduce
  triviality of $H^1(L,\Pic(\Xbar))$, we show that $\phi$ is
  injective. If $(\chi_1,\chi_2,\chi_3) \in \ker\phi$, then $\chi_2$,
  $\chi_3$ and hence also $\chi_{1|K}$, $\chi_{1|K'}$ are
  trivial. Since $K \cap K'=L$, the latter implies $\chi_1=0$, proving
  injectivity of $\phi$.

  Using Lemma~\ref{lem:Br_formula} and (\ref{eq:inf_res}), we deduce
  \begin{equation*}
    \Br(X)/\Br_0(X)\cong H^1(k, \Pic(\Xbar))=0.\qedhere
  \end{equation*}
\end{proof}

\begin{cor}\label{cor:strong_approximation}
  In the situation of Theorem~\ref{thm:main}, assume additionally that
  the extension $K/k$ is cyclic or non-Galois. Then strong
  approximation holds for $X$ off $v_0$.
\end{cor}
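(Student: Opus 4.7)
The plan is to combine the two main results about $X$ that have just been established, namely Theorem~\ref{thm:main} (strong approximation with Brauer--Manin obstruction off $v_0$) and Proposition~\ref{prop:brauer} (computation of $\Br(X)/\Br_0(X)$ under the additional hypothesis). The combination is essentially formal: once one knows that the algebraic Brauer group modulo constants is trivial, the Brauer--Manin set coincides with the full adelic space, so strong approximation with obstruction reduces to strong approximation.

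More concretely, the first step is to apply Theorem~\ref{thm:main}, which yields that $X(k)$ is dense in the image of $X(\AAA_k)^{\Br(X)}$ in $X(\AAA_k^{\{v_0\}})$; the unboundedness hypothesis at $v_0$ is inherited from Theorem~\ref{thm:main}. Next I would invoke Proposition~\ref{prop:brauer} in the cases $K/k$ cyclic or non-Galois to get $\Br(X)/\Br_0(X) = 0$, i.e.\ $\Br(X) = \Br_0(X)$.

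The key remaining observation is that elements of $\Br_0(X)$ produce no obstruction: for $\beta \in \Br_0(X)$, say $\beta$ is the pull-back of some $\alpha \in \Br(k)$, one has $\inv_v(\beta(x_v)) = \inv_v(\alpha)$ at every place $v$, so the global reciprocity law $\sum_{v \in \Omega_k}\inv_v(\alpha) = 0$ forces $\sum_v \inv_v(\beta(x_v)) = 0$ for every adelic point $(x_v)$. Hence $X(\AAA_k)^{\Br(X)} = X(\AAA_k)^{\Br_0(X)} = X(\AAA_k)$, and the conclusion of Theorem~\ref{thm:main} becomes exactly strong approximation for $X$ off $v_0$.

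There is no real obstacle here, so the proof will be only a few lines; the substantive work has already been done in Theorem~\ref{thm:main} and Proposition~\ref{prop:brauer}.
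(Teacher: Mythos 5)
Your proposal is correct and takes exactly the same approach as the paper, which simply cites Theorem~\ref{thm:main} and Proposition~\ref{prop:brauer}; you have only spelled out the standard fact (via global reciprocity) that a trivial $\Br(X)/\Br_0(X)$ makes the Brauer--Manin condition vacuous.
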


\begin{proof}
  This follows directly from Theorem~\ref{thm:main} and Proposition~\ref{prop:brauer}.
\end{proof}

\begin{cor}\label{cor:hasse_principle}
  Let $K/k$ be an extension of number fields of degree $4$, and let
  $\omega_1, \dots, \omega_4 \in \OO_K$ be a basis of $K$ over $k$.
  Let $\XX \subset \AAA^5_{\OO_k}$ be the affine scheme defined by
  \begin{equation*}
    at^2-b = N_{K/k}(z_1\omega_1+\dots+z_4\omega_4)
  \end{equation*}
  for some $a,b \in \OO_k$ such that $at^2-b \in k[t]$ is irreducible
  and splits in $K$. For $X := \XX \times_{\OO_k} k$ and $p : X \to
  \AAA^1_k$ the projection to the $t$-coordinate, assume that $\prod_{v
    \in \infty_k} p(X(k_v))$ is not bounded.

  If
  \begin{equation*}
    \left(\prod_{v \in \infty_k} X(k_v) \times \prod_{v \notin
        \infty_k} \XX(\OO_v)\right)^{\Br(X)} \ne \emptyset,
  \end{equation*}
  then $\XX(\OO_k) \ne \emptyset$, i.e., the Hasse principle with
  Brauer--Manin obstruction holds for integral points on $\XX$.  In
  particular, if $K/k$ is cyclic or non-Galois, then the Hasse
  principle holds for integral points on $\XX$.
\end{cor}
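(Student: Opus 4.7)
The plan is to deduce the corollary directly from Theorem~\ref{thm:main} (and, for the unconditional part of the statement, from Proposition~\ref{prop:brauer}), in the spirit of how Remark~\ref{rem:smooth_sa_hp} passes from strong approximation to the integral Hasse principle. Once the hypotheses of Theorem~\ref{thm:main} are verified, the remainder is essentially bookkeeping: I will use openness of local integral points to convert strong approximation off a single archimedean place into an actual integral solution.

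First I would check that $X$ fits the setting of Theorem~\ref{thm:main}. Since $P(t) = at^2 - b$ is irreducible of degree two, one has $a \ne 0$, and writing $P(t) = a(t^2 - b/a)$ puts $P$ in the form $c(t^2 - a')$ required there; irreducibility of $P$ over $k$ and splitting of $P$ over $K$ become irreducibility of $t^2 - b/a$ and $\sqrt{b/a} \in K$. The archimedean hypothesis is also met, because a product of subsets of real vector spaces is bounded exactly when each factor is, so unboundedness of $\prod_{v \in \infty_k} p(X(k_v))$ produces an archimedean $v_0$ with $p(X(k_{v_0}))$ unbounded. Theorem~\ref{thm:main} then yields strong approximation with Brauer--Manin obstruction for $X$ off $v_0$.

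Next, given an adelic point $(q_v)$ in the set appearing in the hypothesis of the corollary, I would regard it as an element of $X(\AAA_k)^{\Br(X)}$ and apply strong approximation to its image in $X(\AAA_k^{\{v_0\}})$. For each $v < \infty_k$ the subset $\XX(\OO_v) \subset X(k_v)$ is open, since $\XX$ is affine of finite type over $\OO_k$ and $\OO_v \subset k_v$ is open; hence
\begin{equation*}
\prod_{v \in \infty_k \setminus \{v_0\}} X(k_v) \times \prod_{v < \infty_k} \XX(\OO_v)
\end{equation*}
is an open neighborhood of the image of $(q_v)$ in $X(\AAA_k^{\{v_0\}})$. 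Density of $X(k)$ in this image produces a rational $p \in X(k)$ inside the neighborhood, and in particular $p \in \XX(\OO_v)$ for every $v < \infty_k$; equivalently every coordinate of $p$ lies in $\bigcap_{v < \infty_k} \OO_v \cap k = \OO_k$, so $p \in \XX(\OO_k)$, as required.

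In the cyclic or non-Galois case, Proposition~\ref{prop:brauer} gives $\Br(X)/\Br_0(X) = 0$, so the Brauer--Manin condition on $(q_v)$ is automatic and the above argument yields the unconditional integral Hasse principle. I do not foresee any serious obstacle: all the substantive content already sits in Theorem~\ref{thm:main} and Proposition~\ref{prop:brauer}, and the only mild subtlety is the standard openness of $\XX(\OO_v)$ inside $X(k_v)$, which is precisely what allows a single rational approximation to be integral at every finite place simultaneously.
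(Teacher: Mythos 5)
Your argument is correct and follows essentially the same route as the paper's one-line proof, which invokes Theorem~\ref{thm:main} and the mechanism of Remark~\ref{rem:smooth_sa_hp}, together with Proposition~\ref{prop:brauer} for the cyclic or non-Galois case. The extra verifications you supply — rewriting $at^2-b$ as $c(t^2-a')$ to match the theorem's normalization, extracting a single archimedean place with unbounded $p(X(k_{v_0}))$ from unboundedness of the product, and the openness of $\XX(\OO_v)$ in $X(k_v)$ — are exactly the implicit steps the paper leaves to the reader.
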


\begin{proof}
  This follows from Theorem~\ref{thm:main} as in
  Remark~\ref{rem:smooth_sa_hp}. For $K/k$ cyclic or non-Galois, we
  have $\Br(X) = \Br_0(X)$ by Proposition~\ref{prop:brauer}.
\end{proof}

\section{Totally split polynomials represented by
  norms}\label{sec:totally_split}

For the proof of strong approximation for $X$ as in
Theorem~\ref{thm:totally_split}, we determine its universal torsors in
Proposition~\ref{prop:universal_torsors} below. If the factors of
$P(\ttt)$ are linear forms, these turn out to be essentially the
varieties $\mathscr{V}$ that satisfy weak approximation by
\cite[Theorem~1.3]{arXiv:1307.7641} based on
\cite[Theorem~5.2]{arXiv:1307.7641}; for
Theorem~\ref{thm:totally_split}, we use a generalization,
Theorem~\ref{thm:asymptotic} below.  By our Descent
Lemma~\ref{lem:passage}, it then remains to prove strong approximation
for $\mathscr{V}$, which we do in
Proposition~\ref{prop:strong_approximation}.

\begin{prop}\label{prop:universal_torsors}
  Let $k$ be a field of characteristic $0$. Let $K/k$ be a field
  extension of degree $n$. Let $P(\ttt)=c g_1(\ttt)^{e_1}\cdots
  g_r(\ttt)^{e_r}$ with $c \in k^\times$ and pairwise non-proportional
  linear polynomials $g_i(\ttt) \in k[t_1, \dots, t_s]$ in $s \ge 2$
  variables, and $e_1, \dots, e_r \in \ZZ_{>0}$. Let $X$ be defined by
  (\ref{eq:norm}).

  Universal torsors over $X^\sm$ exist if and only if there are
  $\lambda_1, \dots, \lambda_r \in k^\times$ and $\xi \in K^\times$
  satisfying
  \begin{equation}\label{eq:torsor_existence}
    c\lambda_1^{e_1}\cdots \lambda_r^{e_r} = N_{K/k}(\xi).
  \end{equation}

  Then restrictions $\T_U \subset \AAA^{rn+s}_k$ of universal torsors
  $f: \T \to X^\sm$ to $U:=X \cap \{P(\ttt) \ne 0\}$ are geometrically
  rational and defined by
  \begin{equation*}
    N_{K/k}(\zz_i) = \lambda_i^{-1}g_i(\ttt) \ne 0,
  \end{equation*}
  for $i=1, \dots, r$, for some $\lambda_1, \dots, \lambda_r\in
  k^\times$ and $\xi \in K^\times$ satisfying
  (\ref{eq:torsor_existence}). The map $f : \T \to X^\sm$ is defined
  on $\T_U$ by $(\ttt, \zz_1, \dots, \zz_r) \mapsto (\ttt,
  \xi\zz_1^{e_1}\cdots\zz_r^{e_r})$.
\end{prop}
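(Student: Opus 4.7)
My plan is to apply the descent formalism of Colliot-Th\'el\`ene and Sansuc \cite{MR89f:11082}. The first step is to compute the Galois modules $\That := \kbar[U]^\times/\kbar^\times$ and $\Mhat := \Div_{\Xbar^\sm \setminus \Ubar}(\Xbar^\sm)$. Over $\kbar$, I factor $N_{K/k}(\zz)$ into $n$ Galois-conjugate linear forms $w_1, \dots, w_n$, so the equation defining $\Xbar^\sm$ becomes $c\, g_1(\ttt)^{e_1} \cdots g_r(\ttt)^{e_r} = w_1 \cdots w_n$. A residue computation analogous to that in the proof of \cite[Proposition~2]{arXiv:1202.3567} shows that $\That$ is generated by the classes of the $g_i$ and $w_j$ modulo the single relation $\sum_i e_i [g_i] = \sum_j [w_j]$, giving the exact sequence
$$0 \to \ZZ \to \ZZ^r \oplus \ZZ[\Gamma_k/\Gamma_K] \to \That \to 0$$
of $\Gamma_k$-modules.

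For $\Mhat$, the hypothesis $s \ge 2$ together with the pairwise non-proportionality of the $g_i$ forces the hyperplanes $\{g_i = 0\}$ in $\AAA^s_\kbar$ to meet pairwise in codimension $2$. A local analysis then shows that the irreducible components of $\Xbar^\sm \setminus \Ubar$ are precisely the divisors $D_{ij}$ with support $\{g_i = w_j = 0\}$ for $1 \le i \le r$ and $1 \le j \le n$, yielding the permutation $\Gamma_k$-module $\Mhat \cong \ZZ^r \otimes \ZZ[\Gamma_k/\Gamma_K]$. In particular $\Pic(\Xbar^\sm)$ is of finite type, and a computation of vanishing orders near a generic point of each $D_{ij}$ identifies the divisor map $\That \to \Mhat$ from (\ref{eq:seq_gal_modules}) as $[g_i] \mapsto \sum_j D_{ij}$ and $[w_j] \mapsto \sum_i e_i D_{ij}$.

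The existence criterion then follows from standard descent theory: because $\Mhat$ is a permutation module, the Colliot-Th\'el\`ene--Sansuc obstruction to the existence of a universal torsor over $X^\sm$ reduces, via the two exact sequences above, to an obstruction class in $\Br(k)$ that vanishes precisely when $c\,\lambda_1^{e_1}\cdots\lambda_r^{e_r} = N_{K/k}(\xi)$ is solvable with $\lambda_i \in k^\times$ and $\xi \in K^\times$. When the relation holds, I would define $\T_U$ by the stated equations and verify directly that the map $(\ttt,\zz_1,\dots,\zz_r) \mapsto (\ttt, \xi\,\zz_1^{e_1}\cdots\zz_r^{e_r})$ lands in $U$: the norm compatibility reduces exactly to (\ref{eq:torsor_existence}). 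Equipping $\T_U$ with its natural N\'eron--Severi torus action and computing the resulting type then shows that $\T_U$ is the restriction to $U$ of a universal torsor; since $X^\sm \setminus U$ has codimension $\ge 2$ (again using $s \ge 2$), this torsor extends uniquely to all of $X^\sm$.

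Geometric rationality of $\T_U$ is immediate after base change to $\kbar$, where the defining equations decouple into $r$ independent monomial factorizations presenting $\T_U$ as an open subset of $\AAA^{rn+s}_\kbar$. The main technical obstacle is the second step: identifying the irreducible codimension-one components of $\Xbar^\sm \setminus \Ubar$ and checking that their orbit structure is the permutation module $\ZZ^r \otimes \ZZ[\Gamma_k/\Gamma_K]$. The assumption $s \ge 2$ is essential here, since otherwise the analysis of $\Mhat$ and $\Pic(\Xbar^\sm)$ changes qualitatively, as is already visible in the setting of Theorem~\ref{thm:main}.
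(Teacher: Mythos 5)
Your overall strategy is the same as the paper's: use the Colliot-Th\'el\`ene--Sansuc local description of universal torsors, compute the Galois modules $\That$ and $\Mhat$ via a residue computation, and read off both the existence criterion and the explicit equations for $\T_U$. The paper simply cites \cite[Theorem~2.3.1, Corollary~2.3.4]{MR89f:11082} together with the templates in \cite[Theorem~2.2]{MR1961196} and \cite[Proposition~2]{arXiv:1202.3567}; your proposal fleshes out exactly what those references supply, and your exact sequence for $\That$, your identification of $\Mhat$ as the permutation module on the divisors $D_{ij}$, and your description of the divisor map $\That \to \Mhat$ are all correct.

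There is, however, one genuine error: your claim that $X^\sm \setminus U$ has codimension $\ge 2$. In fact $U = X \cap \{P(\ttt) \ne 0\}$, so $X^\sm \setminus U$ is the zero locus of $P(\ttt)$ inside $X^\sm$, which is a divisor -- precisely the union of the $D_{ij} \cap X^\sm$, each of codimension $1$. (The components that are dropped in passing from $X$ to $X^\sm$ have codimension $\ge 2$, but the $D_{ij}$ themselves survive.) Consequently the purity argument you invoke to extend $\T_U$ from $U$ to $X^\sm$ does not apply. The extension has to come, as in the paper and the references it cites, from the local description theorem itself: \cite[Theorem~2.3.1]{MR89f:11082} constructs the torsor over all of $X^\sm$ directly and then characterizes its restriction to $U$, rather than producing a torsor over $U$ that must subsequently be extended across a codimension-one boundary. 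Once you replace the codimension argument by a direct appeal to that theorem (which also delivers the exact obstruction to existence, rather than leaving it as a loosely described class in $\Br(k)$), the proposal matches the paper's approach. The role of $s \ge 2$ in this proposition is also more cosmetic than your discussion suggests -- the relevant divisor and Picard computations go through for $s=1$ too, as in \cite[Proposition~2]{arXiv:1202.3567}; the real need for $s \ge 2$ is in the later analytic input (Theorem~\ref{thm:asymptotic}), not here.
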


\begin{proof}
  The proof proceeds along the lines of \cite[Theorem~2.2]{MR1961196}
  and \cite[Proposition~2]{arXiv:1202.3567}, based on the
  local description of universal torsors in \cite[Theorem~2.3.1,
  Corollary~2.3.4]{MR89f:11082}. Since $\T_U$ is defined over $\kbar$ by
  $u_{i,1}\cdots u_{i,n} = \lambda_i^{-1}g_i(\ttt) \ne 0$
  for $i=1, \dots, r$, it is geometrically rational.
\end{proof}

To prove strong approximation on the universal torsors as in
Proposition~\ref{prop:universal_torsors} in the situation of
Theorem~\ref{thm:totally_split}, we use a generalization of the main
analytic result \cite[Theorem~5.2]{arXiv:1307.7641} of Browning and
Matthiesen from linear forms to (not necessarily homogeneous) linear
polynomials; see also \cite[Remark~5.3]{arXiv:1307.7641}. To state the
result, we introduce some notation.

Let $K$ be a number field of degree $n$, and let $N_{K/\QQ}(\zz) \in
\ZZ[z_1, \dots, z_n]$ be an associated norm form with integral
coefficients. Let $f_1(\ttt), \dots, f_r(\ttt) \in \ZZ[t_1, \dots,
t_s]$ be linear polynomials that are pairwise \emph{affinely
  independent} over $\QQ$, i.e., for $i \ne j \in \{1, \dots, r\}$,
the homogeneous parts $f_i(\ttt)-f_i(0)$ and $f_j(\ttt)-f_j(0)$ are
not proportional over $\QQ$.
Consider the system of equations
\begin{equation}\label{eq:integral_torsor}
  N_{K/\QQ}(\zz_i) = f_i(\ttt),\quad (i=1, \dots, r).
\end{equation}

Let $\frak{D}_+ \subset \RR^n$ be the fundamental domain for the action of the
free part of the norm-$1$-subgroup of the unit group of $K$ as in
\cite[equation~(2.3)]{arXiv:1307.7641}.
Let $\frak{K} \subset [-1,1]^s \subset \RR^s$ be a convex bounded set with
$|f_i(T\frak{K})| \le T$, for $1 \le i \le r$ and sufficiently large
$T \in \RR$.

From \cite[Definition~5.1]{arXiv:1307.7641}, recall the definition of
the representation function
\begin{equation*}
  R(m;\frak{D}_+,\zz',M):= \mathbf{1}_{m \ne 0} \cdot \#\left\{\zz'' \in \ZZ^n \cap \frak{D}_+ :
    \begin{aligned}
      &N_{K/\QQ}(\zz'') = m,\\
      &\zz'' \equiv \zz' \pmod M
    \end{aligned}
  \right\},
\end{equation*}
for $m \in \ZZ$, $M \in \ZZ_{>0}$ and $\zz' \in \ZZ^n$.

Let $q'=(\ttt',\zz_1', \dots, \zz_r') \in \ZZ^{rn+s}$ and $M \in \ZZ_{>0}$.
Then
\begin{equation}\label{eq:counting_function}
N(T):=\sum_{\substack{\ttt'' \in \ZZ^s \cap T\frak{K}\\\ttt'' \equiv \ttt' \pmod M}}
  \prod_{i=1}^r R(f_i(\ttt'');\frak{D}_+,\zz_i',M)
\end{equation}
is the number of solutions $q''=(\ttt'',\zz_1'', \dots, \zz_r'') \in
\ZZ^{rn+s}$ of (\ref{eq:integral_torsor}) lying in $T\frak{K} \times
(\frak{D}_+)^r$ with $q'' \equiv q' \pmod M$ and $f_1(\ttt'')\cdots
f_r(\ttt'') \ne 0$.

Let $r_1$ resp.\ $r_2$ be the number of real resp. complex places of
$K$, let $D_K$ be its discriminant, and let $R_K^{(+)}$ be its
modified regulator as in \cite[Remark~5.4]{arXiv:1307.7641}. Let
\begin{equation*}
c_K:=\frac{2^{r_1-1}(2\pi)^{r_2}R_K^{(+)}}{\sqrt{|D_K|}}.
\end{equation*}

\begin{theorem}\label{thm:asymptotic}
  Let $f_1, \dots, f_r \in \ZZ[t_1, \dots, t_s]$ be linear polynomials
  that are pairwise affinely independent, let $K$ be a number field,
  let $M \in \ZZ$, and let $q'=(\ttt',\zz_1', \dots, \zz_r') \in
  \ZZ^{rn+s}$. Let $\epsilon$ be an arbirary element of $\{\pm\}^r$ if
  $K$ is not totally imaginary, and let $\epsilon = (+, \dots, +)$ if
  $K$ is totally imaginary. Let $\frak{K} \subset [-1,1]^s$ be a
  convex bounded set whose closure is contained in
  \begin{equation*}
    \frak{Q}_\epsilon:=\{\ttt \in \RR^s : \text{$0 < \epsilon_i(f_i(\ttt)-f_i(0)) < 1$ for $i=1, \dots, r$}\}.
  \end{equation*}

  For $N(T)$ as in (\ref{eq:counting_function}), we have
  \begin{equation*}
    N(T) = \beta_\infty \prod_p \beta_p \cdot T^s  + o(T^s), \quad (T \to \infty),
  \end{equation*}
  where
  \begin{equation*}
    \beta_\infty := c_K^r\vol(\frak{K}),
  \end{equation*}
  and $\prod_p \beta_p$ is absolutely convergent, with
  \begin{equation*}
    \beta_p:=\lim_{m \to \infty} \frac{1}{p^{ms}}
    \sum_{\substack{\ttt''' \in (\ZZ/p^m\ZZ)^s\\\ttt''' \equiv \ttt' \pmod{p^{v_p(M)}}}}
    \prod_{i=1}^r \frac{\rho(p^m, f_i(\ttt'''), \zz_i'; p^{v_p(M)})}{p^{m(n-1)}}
  \end{equation*}
  and
  \begin{equation*}
    \rho(p^m,A,\zz';p^{v_p(M)}) := \#\left\{\zz''' \in (\ZZ/p^m\ZZ)^n :
      \begin{aligned}
        &N_{K/\QQ}(\zz''') \equiv A \pmod{p^m},\\
        &\zz''' \equiv \zz' \pmod{p^{v_p(M)}}
      \end{aligned}
    \right\}
  \end{equation*}
  for $m\ge v_p(M)$ and $A \in \ZZ/p^m\ZZ$.
\end{theorem}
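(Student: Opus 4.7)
The plan is to revisit the proof of the Browning--Matthiesen asymptotic \cite[Theorem~5.2]{arXiv:1307.7641} and adapt it from linear \emph{forms} to inhomogeneous linear \emph{polynomials}; this extension is anticipated in \cite[Remark~5.3]{arXiv:1307.7641}. The argument there has three broad steps: a $W$-trick to isolate small-prime contributions; an appeal to a generalized von Neumann theorem powered by Gowers-norm control of the norm-form representation function (due to Matthiesen); and an Euler-product computation of the resulting local densities.

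First I would apply the $W$-trick: choose $w = w(T)\to\infty$ slowly, set $W = \prod_{p\le w} p^{a_p}$ with $a_p \ge v_p(M)$, and partition the sum over $\ttt''$ by its residue class modulo $W$. Writing $\ttt'' = \ttt''_0 + W\mathbf{u}$, each polynomial becomes $f_i(\ttt'')=f_i(\ttt''_0)+W L_i(\mathbf{u})$, still a linear polynomial in $\mathbf{u}$, with pairwise affine independence preserved. Next I would apply the main analytic input: after the $W$-trick, the balanced representation function $R(\cdot;\frak{D}_+,\zz_i',M)$ minus its expected local mean has small $U^{s'}$-Gowers norm on each arithmetic progression, by Matthiesen's Gowers-norm control for norm-form representations (used as a black box in \cite{arXiv:1307.7641}); combined with the generalized von Neumann theorem, which applies to any system of pairwise affinely independent linear polynomials of finite Cauchy--Schwarz complexity, this reduces $N(T)$ to a ``Cram\'er model'' main term plus an error of $o(T^s)$.

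Finally I would evaluate the main term. The archimedean contribution is the integral over $T\frak{K}$ of $c_K^r$, multiplied by the indicator that $\epsilon_i f_i(\ttt'')>0$ for all $i$: this gives $c_K^r\vol(\frak{K})\,T^s=\beta_\infty T^s$, where the hypothesis $\overline{\frak{K}}\subset\frak{Q}_\epsilon$ ensures the indicator equals $1$ for all $\ttt''\in T\frak{K}$ when $T$ is large, so each $f_i(\ttt'')$ is representable by the norm form with the prescribed sign pattern (automatic when $K$ is totally imaginary). The non-archimedean contribution assembles into the Euler product $\prod_p \beta_p$ as defined in the statement; absolute convergence follows from the standard bound $\beta_p = 1 + O_K(p^{-1-\delta})$ for almost all $p$, via Chebotarev density for $N_{K/\QQ}$.

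The main obstacle is the second step: verifying that Matthiesen's Gowers-norm control and the generalized von Neumann estimate transfer cleanly from the pure linear forms in \cite{arXiv:1307.7641} to the shifted polynomials $f_i(\ttt''_0)+W L_i(\mathbf{u})$. Since the underlying Green--Tao--Ziegler equidistribution theorems are formulated for arbitrary affine-linear systems of finite Cauchy--Schwarz complexity, and pairwise affine independence yields finite complexity, this is essentially a bookkeeping adaptation rather than a new analytic ingredient; the precise statements in \cite{arXiv:1307.7641} nevertheless need to be re-derived with implicit constants depending on the coefficients of all $f_i$, including the offsets $f_i(0)$.
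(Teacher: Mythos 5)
Your proposal takes essentially the same route as the paper: defer to the machinery of Browning--Matthiesen \cite[\S 6--\S 10]{arXiv:1307.7641} ($W$-trick, Gowers-norm control of the norm-form representation function, generalized von Neumann, Euler product), and check that it transfers from linear forms to inhomogeneous linear polynomials. The paper's actual proof is even terser — it simply cites \cite[\S 6--\S 10]{arXiv:1307.7641} and records one technical observation: the hypothesis $\overline{\frak{K}}\subset\frak{Q}_\epsilon$ yields $0<\epsilon_if_i(T\frak{K})<T$ for large $T$, which permits a \emph{direct} application of \cite[Proposition~7.10]{arXiv:1307.7641} on all of $T\frak{K}$, thereby bypassing the decomposition of $\frak{K}$ into sign-constant pieces used in \cite[Proposition~8.2]{arXiv:1307.7641} (the paper notes that in the inhomogeneous case no such decomposition exists that still allows applying Prop.~7.10). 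Your remark that the hypothesis on $\frak{K}$ forces the indicator of the correct sign pattern to equal $1$ on $T\frak{K}$ is the same observation in different words, so your sketch is faithful to the paper; it just spells out the internal structure of the Browning--Matthiesen argument rather than citing it as a unit, and is slightly less explicit about \emph{why} the condition on $\frak{K}$ is needed (to avoid the decomposition step, not just to compute $\beta_\infty$).
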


\begin{proof}
  The proof follows \cite[\S 6--\S 10]{arXiv:1307.7641}.
  We note that our condition on $\frak{K}$ implies that
  \begin{equation}\label{eq:inequality}
    0 < \epsilon_if_i(T\frak{K}) < T
  \end{equation}
  for $i=1, \dots, r$ and sufficiently large $T$. This allows us to
  apply \cite[Proposition~7.10]{arXiv:1307.7641} as in
  \cite[Proposition~8.2]{arXiv:1307.7641}, but without decomposing
  $\frak{K}$ (in the inhomogeneous case, a decomposition that allows a
  direct application of \cite[Proposition~7.10]{arXiv:1307.7641} does
  not seem to exist).
\end{proof}

\begin{prop}\label{prop:strong_approximation}
  Let $s \ge 2$.  Let $Y \subset \AAA_{\QQ}^{rn+s}$ be the variety
  defined by
  \begin{equation}\label{eq:integral}
    N_{K/\QQ}(\zz_i) = \lambda_i^{-1}g_i(\ttt), \quad (i=1, \dots, r)
  \end{equation}
  with $g_i(t_1, \dots, t_s) \in \QQ[t_1, \dots, t_s]$ pairwise affinely
  independent linear polynomials and $\lambda_i \in \QQ^\times$.

  If $K$ is not totally imaginary or if the polytope
  \begin{equation*}
    Q:=\{\ttt \in \RR^s : \text{$\lambda_i^{-1} g_i(\ttt) > 0$ for $i=1, \dots, r$}\}
  \end{equation*}
  contains balls of arbitrarily large radius, then smooth strong
  approximation holds for $Y$ off $\infty$.
\end{prop}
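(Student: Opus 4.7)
The plan is to deduce strong approximation off $\infty$ for $Y$ from the asymptotic formula in Theorem~\ref{thm:asymptotic}, applied to $f_i(\ttt) := \lambda_i^{-1} g_i(\ttt)$. After rescaling the $\zz_i$ and $\ttt$ by suitable common integer factors (using the homogeneity of $N_{K/\QQ}$), we may assume the $f_i$ have integer coefficients and seek integer solutions of (\ref{eq:integral}). Given a smooth adelic point $(q_v) \in Y^\sm(\AAA_\QQ)$ and a basic open neighborhood $W$ of its image in $Y(\AAA_\QQ^f)$, determined by a finite set $S$ of finite places with $p$-adic precisions $p^{m_p}$ for $p \in S$, we apply the Chinese Remainder Theorem to choose $M := \prod_{p \in S} p^{m_p}$ and $q' = (\ttt', \zz_1', \dots, \zz_r') \in \ZZ^{rn+s}$ such that every integer solution $q''$ of (\ref{eq:integral}) congruent to $q'$ modulo $M$ automatically lies in $W$.

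At the archimedean place we use the hypothesis to select a sign pattern $\epsilon \in \{\pm\}^r$ and a convex bounded set $\frak{K} \subset [-1,1]^s$ whose closure lies in $\frak{Q}_\epsilon$. If $K$ is not totally imaginary, every $\epsilon$ is admissible by Theorem~\ref{thm:asymptotic}, and pairwise affine independence of the $f_i$ combined with $s \ge 2$ forces some $\frak{Q}_\epsilon$ to have non-empty interior, producing $\frak{K}$ after a small scaling. If $K$ is totally imaginary, only $\epsilon = (+, \dots, +)$ is admissible, so we need a non-empty interior in $\frak{Q}_+ = \{0 < f_i(\ttt) - f_i(0) < 1\}$; by a standard convex-geometry argument, the assumption that $Q = \{f_i(\ttt) > 0\}$ contains balls of arbitrarily large radius implies that its recession cone $\{\nabla f_i \cdot \ttt > 0 : i=1,\dots,r\}$ has non-empty interior, small positive dilates of which then yield the required $\frak{K}$.

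Theorem~\ref{thm:asymptotic} now gives $N(T) = \beta_\infty \cdot (\prod_p \beta_p) \cdot T^s + o(T^s)$ as $T \to \infty$. The archimedean factor $\beta_\infty = c_K^r \vol(\frak{K})$ is strictly positive. Each local factor $\beta_p$ is strictly positive because the $p$-adic component $q_p$ of the smooth adelic point, combined with Hensel's lemma on $Y^\sm$, yields integer approximations of $q'$ to arbitrary $p$-adic precision satisfying (\ref{eq:integral}), so the limit defining $\beta_p$ is bounded below by a positive constant. Absolute convergence of $\prod_p \beta_p$ is part of Theorem~\ref{thm:asymptotic}, and since all $\beta_p$ are positive and tend uniformly to $1$ at primes of good reduction, the product itself is strictly positive. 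Hence $N(T) \to \infty$, yielding an integer solution $q''$ congruent to $q'$ modulo $M$, which is the rational point of $Y^\sm$ approximating $(q_v)$ in $W$.

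The main obstacle is producing a convex set $\frak{K}$ respecting the sign-pattern constraint of Theorem~\ref{thm:asymptotic}; this is precisely what the arbitrary-radius ball condition on $Q$ supplies in the totally imaginary case, and what pairwise affine independence of the $f_i$ ensures in the general case. Once $\frak{K}$ is in hand, the passage from the asymptotic count of Theorem~\ref{thm:asymptotic} to the existence of an approximating rational point is routine, modulo checking the positivity of the Euler product via the given local data.
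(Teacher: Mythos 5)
Your proposal tracks the paper's strategy closely: both proofs reduce strong approximation off $\infty$ to showing $N(T) > 0$ via Theorem~\ref{thm:asymptotic}, and both resolve the archimedean constraint by exhibiting a suitable sign pattern $\epsilon$ and box $\frak{K}$ inside $\frak{Q}_\epsilon$ (your recession-cone description of $Q_\epsilon$ is the same cone the paper works with, namely $\{\epsilon_i(f_i(\ttt)-f_i(0))>0\}$).

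There are two places where your write-up skips over something that the paper's proof handles and that you would actually need. First, you reduce immediately to finding an \emph{integer} $q'$ with $q'' \equiv q' \pmod M$ landing in the neighborhood $W$, but the given $q_p \in Y^\sm(\QQ_p)$ for $p \in S$ need not have $p$-integral coordinates, and then no integer can be $p$-adically close to $q_p$. Your one rescaling (to make the $f_i$ integral) does not address this; you need a second rescaling by a constant $C$ supported on $S$ (applied to $\ttt$ and all $\zz_i$, and depending on the given adelic point and precision), which replaces $Y$ by an auxiliary integral model $Y'$, solve the congruence count there, and scale back by $C^{-1}$ to obtain a rational (not necessarily integral) point of $Y$ that is close at $S$ and integral away from $S$. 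Second, the positivity of each $\beta_p$ is asserted rather than proved: $\beta_p$ is defined as a limit of averages, and passing from the existence of a single $p$-adic solution near $q_p$ to a \emph{uniform lower bound} on all the $m$-level approximations is exactly where one needs the stabilization result \cite[Lemma~3.4]{arXiv:1307.7641}, together with the fact (ensured by the implicit function theorem applied to the given adelic point, another step you do not mention) that $f_i(\ttt_p) \neq 0$ in $\ZZ_p$. The phrase ``tend uniformly to $1$ at primes of good reduction'' is not needed once you note that absolute convergence of $\prod_p \beta_p$ plus strict positivity of each $\beta_p$ already forces a positive product; but the positivity of each $\beta_p$ itself is the substantive point and requires the argument above. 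With those two steps filled in, the proposal matches the paper's proof.
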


\begin{proof}
  We may assume that the norm form $N_{K/\QQ} \in \ZZ[z_1, \dots,
  z_n]$ has integral coefficients. By rescaling $t_1, \dots, t_s$ and
  $\zz_1, \dots, \zz_r$, we may assume that
  \begin{equation*}
    \lambda_i^{-1}g_i(\ttt) \in \ZZ[t_1, \dots, t_s]
  \end{equation*}
  for $i=1, \dots, r$ are linear forms with integral coefficients.  We
  define the integral model $\YY \subset \AAA^{rn+s}_\ZZ$ of $Y$ by the
  same equations, but considered over $\ZZ$.

  For smooth strong approximation on $Y$, we must show: for any finite set of
  places $S \subset \Omega_\QQ \setminus \{\infty\}$, any $(q_v) =
  (\ttt_v,\zz_{1,v}, \dots, \zz_{r,v}) \in Y^\sm(\AAA_\QQ)$ with $q_v \in
  \YY(\ZZ_v)$ for all $v \notin S \cup \{\infty\}$, there is a $q \in
  Y^\sm(\QQ)$ with $q \in \YY(\ZZ_v)$ for all $v \notin S \cup
  \{\infty\}$ and $q$ arbitrarily close to $q_v$ for all $v \in S$.
  This is the same as finding $q \in Y^\sm(\QQ) \cap \YY(\ZZ_S)$ arbitrarily close
  to $q_v$ for all $v \in S$.

  By the implicit function theorem, we may assume that the given
  $(q_v)$ satisfies
  \begin{equation}\label{eq:non-zero}
    g_1(\ttt_v)\cdots g_r(\ttt_v) \ne 0
  \end{equation}
  for all $v \in \Omega_\QQ$.

  Let $C \in \ZZ$ with $C^{-1} \in \ZZ_S$ (i.e., all prime factors
  of $C$ lie in $S$) be such that
\begin{equation*}
q_v'=(\ttt_v',\zz_{1,v}', \dots, \zz_{r,v}'):=
(C\ttt_v,C\zz_{1,v}, \dots,C\zz_{r,v}) \in \ZZ_v^{rn+s}
\end{equation*}
for all $v \in S$.

Let $Y'$ be the variety
obtained from $Y$ by the change of coordinates replacing $(\ttt,\zz_1, \dots, \zz_r)$ by $(C\ttt, C\zz_1,
\dots, C\zz_r)$. An integral model $\YY'$ of $Y'$ is given by
\begin{equation}\label{eq:integral_new}
  N_{K/\QQ}(\zz_i)=f_i(\ttt), \quad (i=1, \dots, r)
\end{equation}
with $f_i(\ttt):=C^n\lambda_i^{-1}g_i(C^{-1}\ttt) \in \ZZ[\ttt]$. This
maps $(q_v) \in Y^\sm(\AAA_\QQ)$ to $(q_v') \in Y'^\sm(\AAA_\QQ)$ as above with
$q_v' \in \YY'(\ZZ_v)$ for all $v \in S$ (and of course still $q_v'
\in \YY'(\ZZ_v)$ for all $v \notin S \cup \{\infty\}$).

By strong approximation on $\AAA^{rn+s}_\ZZ$, we find $q'=(\ttt',\zz_1',
\dots, \zz_r') \in \ZZ^{rn+s}$ arbitrarily close to $q_v$ for all $v
\in S$.

Now we are looking for a point $q''=(\ttt'',\zz_1'',\dots,\zz_r'') \in
Y'^\sm(\QQ) \cap \YY'(\ZZ)$ very close to $q'$ in the $v$-adic topology
for all $v \in S$. This translates into the condition
\begin{equation}\label{eq:congruence}
  q'' \equiv q' \pmod M
\end{equation}
for a positive integer $M=\prod_{p \in S} p^{m'}$ for some
sufficiently large integer $m'$.  Once we have found such a $q'' \in
Y'^\sm(\QQ) \cap \YY'(\ZZ)$, then this is also very close to $q_v' \in
\YY'(\ZZ_v)$ for all $v \in S$, and then $q:=(C^{-1}\ttt'',
C^{-1}\zz_1'', \dots, C^{-1}\zz_r'') \in Y^\sm(\QQ) \cap \YY(\ZZ_S)$
is very close to $q_v$ for all $v \in S$, completing the proof.

By definition of $N(T)$ in (\ref{eq:counting_function}), where the
condition $f_1(\ttt'')\cdots f_r(\ttt'') \ne 0$ ensures $q'' \in
Y'^\sm(\QQ)$, it is enough to show $N(T) > 0$ for some $T$.

There is an $\epsilon \in \{\pm\}^r$ and a set $\frak{K}$ with
positive volume satisfying the conditions of
Theorem~\ref{thm:asymptotic}. Indeed, if $K$ is not totally imaginary,
we choose $\epsilon \in \{\pm\}^r$ such that
\begin{equation*}
  Q_\epsilon:=\{\ttt \in \RR^s : \text{$\epsilon_i(f_i(\ttt)-f_i(0)) > 0$ for $i=1, \dots, r$}\},
\end{equation*}
(which is a cone whose vertex is the origin) is non-empty; such an
$\epsilon$ clearly exists. If $K$ is totally imaginary, we must choose
$\epsilon=(+, \dots, +)$. We note that our condition on $Q$ is
equivalent to the condition that
\begin{equation*}
  Q':=\{\ttt \in \RR^s : \text{$f_i(\ttt) > 0$ for $i=1, \dots, r$}\}
\end{equation*}
contains balls of arbitrarily large radius.  For $R \in \RR_{>0}$, let
$\ttt_R$ be the center of a ball of radius $R$ contained in $Q'$. For
sufficiently large $R$, we have $f_i(\ttt_R) > f_i(0)$, hence $\ttt_R$
lies in the cone $Q_\epsilon$ for $\epsilon=(+, \dots, +)$, which is
therefore non-empty.  For any $K$, a point $\ttt$ in $Q_\epsilon$
sufficiently close to the origin satisfies $f_i(\ttt) < 1$ for $i=1,
\dots, r$, hence lies in $\frak{Q}_\epsilon$ as in
Theorem~\ref{thm:asymptotic}, and the same is true for a small enough
closed ball $\frak{K}$ around $\ttt$.

Hence an application of Theorem~\ref{thm:asymptotic} with this
$\frak{K}$ gives $N(T) = \beta_\infty \prod_p \beta_p \cdot T^s +
o(T^s)$ for large enough $T$, where $\beta_\infty =
c_K^r\vol(\frak{K}) > 0$.

Finally, we show that $\prod_p\beta_p > 0$. By
\cite[Proposition~5.5]{arXiv:1307.7641} (which also holds for our
inhomogeneous linear polynomials) there is an $L'>0$ such that $\prod_{p >
  L'} \beta_p > 0$. Now we show that $\beta_p > 0$ for any prime $v=p$. Let
\begin{equation*}
  m':=2(v_p(M)+\max_{i=1, \dots, r} v_p(f_i(\ttt_p')) + v_p(n))+1.
\end{equation*}
Let $q''' = (\ttt''',\zz_1''', \dots, \zz_r''') \in \ZZ^{rn+s}$ be
such that $q''' \equiv q_p' \pmod{p^{m'}}$.  Then
$\rho(p^{m'},f_i(\ttt'''), \zz_i''',p^{v_p(M)}) > 0$. Since $f_i(\ttt''') \ne 0 \in
\ZZ/p^{m'}\ZZ$ because $f_i(\ttt_p') \ne 0 \in \ZZ_p$ by (\ref{eq:non-zero}),
we can apply \cite[Lemma~3.4]{arXiv:1307.7641} to obtain
\begin{equation*}
  \frac{\rho(p^m,f_i(\ttt''''), \zz_i''';p^{v_p(M)})}{p^{m(n-1)}}=\frac{\rho(p^{m'},f_i(\ttt'''), \zz_i''';p^{v_p(M)})}{p^{m'(n-1)}} \ge \frac{1}{p^{m'(n-1)}}
\end{equation*}
for all $m\ge m'$ and all $\ttt'''' \in (\ZZ/p^m\ZZ)^s$ satisfying
$\ttt'''' \equiv \ttt''' \pmod{p^{m'}}$. Since $\ttt'$ was chosen very
close to $\ttt_p'$ for $p \in S$, and since $v_p(M)=0$ for $p \notin
S \cup \{\infty\}$, we note that $\ttt''' \equiv \ttt_p'
\pmod{p^{m'}}$ implies $\ttt''' \equiv \ttt'
\pmod{p^{v_p(M)}}$. Therefore, the $m$-th approximation $\beta_{p,m}$
of $\beta_p$ has at least $p^{(m-m')s}$ summands from these $\ttt''''$
that are at least as large as $p^{-ms}\cdot (p^{-m'(n-1)})^r$, hence $\beta_{p,m}
\ge p^{-m'(s+r(n-1))}$. Therefore, $\beta_p > 0$ for all primes $p$.
\end{proof}

The complement of the hyperplanes $\{g_1(\ttt)=0\}$, \dots, $\{g_r(\ttt)=0\}$ in affine $s$-space
consists of open polytopes. The following lemma shows that essentially every
other such polytope occurs as the interior of a connected component of
$p(X^\sm(\RR))$ if $K$ is totally imaginary.

\begin{lemma}\label{lem:connected_components}
  Let $X$ be as in Theorem~\ref{thm:totally_split}, with $P(\ttt) =
  cg_1(\ttt)^{e_1}\cdots g_r(\ttt)^{e_r}$ as in
  Proposition~\ref{prop:universal_torsors}, for $k=\QQ$.

  If $K$ is not totally imaginary, then $p(X^\sm(\RR)) = \RR^s$.
  If $K$ is totally imaginary, then the connected components of $p(X^\sm(\RR))$
  are precisely the sets
  \begin{equation*}
    Q_\epsilon := \left\{\ttt \in \RR^s :
    \begin{aligned}
      &\text{$g_i(\ttt) = 0$ for at most one $i\in \{1, \dots, r\}$ with
        $e_i=1$}\\
      &\text{$\epsilon_i g_i(\ttt) > 0$ for all other $i \in \{1, \dots, r\}$}
    \end{aligned}
    \right\}
  \end{equation*}
  for all $\epsilon=(\epsilon_1, \dots, \epsilon_r)\in \{\pm\}^r$ such that
  $(\epsilon_1)^{e_1}\cdots (\epsilon_r)^{e_r}\cdot c > 0$.
\end{lemma}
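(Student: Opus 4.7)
My plan is to analyze $p(X^\sm(\RR))$ by decomposing $\RR^s$ according to the sign pattern of $g_1(\ttt),\ldots,g_r(\ttt)$, using the explicit real form of $N_{K/\QQ}$. Under the isomorphism $K\otimes_\QQ\RR\cong\RR^{r_1}\times\CC^{r_2}$ I would write $N_{K/\QQ}(\zz)=\prod_\sigma x_\sigma(\zz)\cdot\prod_\tau|w_\tau(\zz)|^2$ in suitable real coordinates. This shows that $N_{K/\QQ}(\RR^n)=\RR_{\ge 0}$ if $K$ is totally imaginary and equals $\RR$ otherwise, so $p(X(\RR))$ is either $\{P\ge 0\}$ or $\RR^s$. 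On $\{P(\ttt)\ne 0\}$ every real lift $(\ttt,\zz)\in X(\RR)$ is automatically smooth, since Euler's identity gives $\nabla_\zz N_{K/\QQ}(\zz)\ne 0$ whenever $N_{K/\QQ}(\zz)\ne 0$; hence only the stratum $\{P(\ttt)=0\}$ requires further analysis.

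For $K$ not totally imaginary, I would show that every $\ttt_0$ with $P(\ttt_0)=0$ has a smooth lift: choose $\zz_0\in\RR^n$ so that exactly one real factor $x_{\sigma_0}(\zz_0)$ vanishes and all other $x_\sigma$ and $|w_\tau|^2$ are nonzero. Then $N_{K/\QQ}(\zz_0)=0$ and a product-rule computation gives $\nabla_\zz N_{K/\QQ}(\zz_0)$ as a nonzero scalar multiple of $\nabla x_{\sigma_0}$, which is nonzero because $\sigma_0$ is nontrivial on the $\QQ$-basis of $K$. Thus $(\ttt_0,\zz_0)\in X^\sm(\RR)$ and $p(X^\sm(\RR))=\RR^s$.

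For $K$ totally imaginary, the key technical step is to show that $\nabla_\zz N_{K/\QQ}$ vanishes identically on $\{N_{K/\QQ}=0\}\cap\RR^n$: each factor $|w_\tau|^2=w_\tau\overline{w_\tau}$ has vanishing gradient on its own zero set, so the product rule forces the full gradient to be zero. Granting this, smoothness of $X$ at a lift of $\ttt$ with $P(\ttt)=0$ reduces to $\nabla P(\ttt)\ne 0$, which holds precisely when exactly one $g_i(\ttt)$ vanishes and that index has $e_i=1$. Combined with the inclusion $p(X^\sm(\RR))\subseteq\{P\ge 0\}$ and the automatic smoothness on $\{P>0\}$, this gives
\begin{equation*}
p(X^\sm(\RR))=\{\ttt:P(\ttt)>0\}\cup\{\ttt:\text{exactly one }g_i(\ttt)=0\text{ and that }i\text{ has }e_i=1\}.
\end{equation*}

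Finally, I would match this set to the disjoint union of the valid $Q_\epsilon$. The sign-pattern partition $\epsilon_j:=\sgn(g_j(\ttt))$ identifies $\{P(\ttt)>0\}$ with the union of the open polytopes $\{\epsilon_jg_j>0\ \forall j\}$ satisfying $c\prod\epsilon_j^{e_j}>0$; each allowed boundary stratum $\{g_i=0,\ e_i=1,\ \epsilon_jg_j>0\text{ for }j\ne i\}$ attaches to exactly one such polytope, because flipping $\epsilon_i$ across the wall $\{g_i=0\}$ toggles the sign of $c\prod\epsilon_j^{e_j}$ when $e_i=1$ and hence toggles the validity of $\epsilon$. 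Each $Q_\epsilon$ is then convex and hence connected, and distinct valid $Q_\epsilon$ are disjoint by the same parity argument, so the $Q_\epsilon$ are precisely the connected components. I expect the gradient-vanishing lemma in the totally imaginary case to be the subtlest step, since it is exactly what prevents the components from merging across walls of multiplicity $\ge 2$ or at wall intersections.
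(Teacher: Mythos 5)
Your proposal is correct and follows essentially the same route as the paper: identify the smooth locus via the Jacobian criterion, then use the fact that $N_{K/\QQ}$ is indefinite over $\RR$ when $K$ is not totally imaginary (so every $\ttt$ has a smooth real lift) and positive definite when $K$ is totally imaginary (so $P(\ttt)=0$ forces $\zz=0$, and smoothness reduces to $\nabla P(\ttt)\ne 0$, i.e., exactly one $g_i(\ttt)=0$ with $e_i=1$). The paper phrases this by first writing out the singular locus of $\Xbar$ from the $\kbar$-factorization $P(\ttt)=u_1\cdots u_n$ and then specializing, whereas you compute gradients directly over $\RR$ via the real factorization of $N_{K/\QQ}$; the two are interchangeable. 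One small simplification you could make: in the totally imaginary case you don't need the product-rule argument for $\nabla_\zz N_{K/\QQ}$ vanishing on $\{N=0\}\cap\RR^n$ — positive definiteness gives $\{N=0\}\cap\RR^n=\{0\}$, and every partial derivative of the degree-$n$ homogeneous form $N_{K/\QQ}$ vanishes at the origin for the trivial reason that it is homogeneous of degree $n-1\ge 1$.
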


\begin{proof}
  Since $\Xbar$ is defined by
  $c g_1(\ttt)^{e_1} \cdots g_r(\ttt)^{e_r} = u_1\cdots u_n$,
  the singular locus of $\Xbar$ is the union of all $\{g_i(\ttt) =
  g_j(\ttt) = u_l = u_m = 0\}$ for $l \ne m \in \{1, \dots, n\}$ and
  $i,j \in \{1, \dots, r\}$ with either $i=j$ and $e_i>1$ or $i \ne
  j$.

  If $K$ is not totally imaginary, the norm form $N_{K/\QQ}$ is
  indefinite over $\RR$, and for any $\ttt \in \RR^s$, the number
  $P(\ttt)$ can be represented by $N_{K/\QQ}(\zz)$ for some $\zz \in
  \RR^n$ with $(\ttt,\zz) \in X^\sm(\RR)$.

  If $K$ is totally imaginary, the norm form $N_{K/\QQ}$ is positive definite
  over $\RR$, hence for any $(\ttt,\zz) \in X(\RR)$ with $P(\ttt)=0$, we have
  $\zz=0$, hence $\ttt \in p(X^\sm(\RR))$ if and only if $g_i(\ttt)=0$ for at
  most one $i \in \{1, \dots, r\}$ that satisfies additionally $e_i=1$. This
  clearly splits into the connected components described in the statement of
  the lemma.
\end{proof}

\begin{proof}[Proof of Theorem~\ref{thm:totally_split}]
  We want to apply Lemma~\ref{lem:passage}. Let $U := X \cap \{P(\ttt)
  \ne 0\}$. Let $f: \T \to X^\sm$ be a universal torsor, with local
  description $\T_U \to U$ as in
  Proposition~\ref{prop:universal_torsors} with associated $\lambda_1,
  \dots, \lambda_r \in \QQ^\times$ and $\xi \in K^\times$.  Also by
  Proposition~\ref{prop:universal_torsors}, $\T_U$ is geometrically
  rational, hence geometrically integral.

  Let $Y \subset \AAA_\QQ^{rn+s}$ be defined by
  \begin{equation*}
    N_{K/\QQ}(\zz_i) = \lambda_i^{-1}g_i(\ttt),\quad (i=1, \dots, r).
  \end{equation*}
  By Proposition~\ref{prop:universal_torsors}, we have an open
  immersion $\T_U \subset Y$.

  The morphism $g : Y \to X$ defined by $(\ttt, \zz_1, \dots, \zz_r)
  \mapsto (\ttt, \xi\zz_1^{e_1}\cdots\zz_r^{e_r})$ clearly agrees on
  $\T_U$ with $f : \T \to X^\sm$.

  If $K$ is not totally imaginary, every such $Y$ satisfies smooth strong
  approximation off $\infty$ by Proposition~\ref{prop:strong_approximation},
  hence Lemma~\ref{lem:passage} implies
  Theorem~\ref{thm:totally_split}. Furthermore, $C=p(X^\sm(\RR)) = \RR^s$ by
  Lemma~\ref{lem:connected_components}.

  If $K$ is totally imaginary, Proposition~\ref{prop:strong_approximation}
  gives smooth strong approximation off $\infty$ not necessarily for all
  $Y$. Following the proof of Lemma~\ref{lem:passage}, we see that $X^\sm(k)$
  is dense in the image of $V^{\Br(X^\sm)}$ in $X(\AAA_k^f)$, where $V$ is the
  set of all $(p_v) \in X^\sm(\AAA_k)$ that can be lifted to $(r_v) \in
  \T(\AAA_k)$ for a universal torsor $f : \T \to X^\sm$ such that the
  associated $Y$ satisfies strong approximation off $\infty$.

  To complete the proof of Theorem~\ref{thm:totally_split}, it remains
  to show that $p^{-1}(C) \times X^\sm(\AAA_k^f)$ is contained in
  $V$. Indeed, consider $(p_v)=(\ttt_v,\zz_v) \in X^\sm(\AAA_k)$ with
  $\ttt_\infty$ in an unbounded connected component $Q_\epsilon$ of
  $C$ for some $\epsilon \in \{\pm\}^r$ as in
  Lemma~\ref{lem:connected_components}. Then we have its lift $(r_v)
  \in \T(\AAA_k)$ for some universal torsor $f : \T \to X^\sm$, and
  very close to it $(r_v')=(\ttt_v',\zz_{1,v}', \dots, \zz_{r,v}') \in
  \T_U(\AAA_k)$. By choosing $(r_v')$ close enough to $(r_v)$, we know
  that $(f(r_v'))=(\ttt_v', \xi\zz_{1,v}'^{e_1}\cdots
  \zz_{r,v}'^{e_r}) \in U(\AAA_k)$ has $\ttt_\infty'$ on the same
  $Q_\epsilon$ as $\ttt_\infty$. Since $P(\ttt_\infty') \ne 0$, we
  have $\epsilon = (\epsilon_1,\dots, \epsilon_r)$ with $\epsilon_i =
  \sgn(g_i(\ttt'_\infty)) \in \{\pm\}$. Since $(r_v')$ satisfies the
  equations defining $\T_U$ as in
  Proposition~\ref{prop:universal_torsors}, we have
  $\lambda_i^{-1}g_i(\ttt_\infty') = N_{K/\QQ}(\zz_{i,\infty}') >
  0$, hence $\sgn(\lambda_i^{-1}) = \sgn(g_i(\ttt_\infty')) =
  \epsilon_i$. Therefore, the associated $Q$ in
  Proposition~\ref{prop:strong_approximation} is precisely the
  interior of $Q_\epsilon$. Since $Q_\epsilon$ contains balls of arbitrarily
  large radius by
  assumption, the same holds for $Q$, and
  Proposition~\ref{prop:strong_approximation} tells us that $Y$
  satisfies smooth strong approximation, hence $(p_v) \in V$ as
  required.

  If $g_1(\ttt), \dots, g_r(\ttt)$ are linear forms, then the conditions
  $\epsilon_i g_i(\ttt) > 0$ define open halfspaces in $\RR^s$ bounded by the
  hyperplanes $\{g_i(\ttt)=0\}$ through the origin. Therefore, the interior of
  every connected component of $p(X^\sm(\RR))$ is a cone whose vertex is the
  origin, hence every connected component contains balls of arbitrarily large
  radius, and $C=p(X^\sm(\RR))$.
\end{proof}

\begin{remark}\label{remark:s_ge_2}
  Theorem~\ref{thm:asymptotic} is restricted to $s \ge 2$ due to
  fundamental restrictions in the additive combinatorics
  techniques. Nevertheless, \cite[Theorem~1.1]{arXiv:1307.7641} proves
  a weak approximation result also for $s=1$ by homogenizing the
  equations describing the \emph{vertical} torsors (see \cite[\S
  1.2]{arXiv:1307.7641}), using that a smooth variety satisfies weak
  approximation if and only if the same holds for an open subset. The
  latter is not true for strong approximation, hence we can deduce
  Theorem~\ref{thm:totally_split} only for $s \ge 2$.
\end{remark}

\begin{remark}
  For $X$ as in Theorem~\ref{thm:totally_split}, write
  $P(\ttt)=cg_1(\ttt)^{e_1}\cdots g_r(\ttt)^{e_r}$ as in
  Proposition~\ref{prop:universal_torsors}. Under the assumption that
  $\gcd(e_1, \dots, e_r) = 1$, the Picard group $\Pic(\Xbar^\sm)$ is
  free of finite rank, hence $\Br_1(X^\sm)/\Br_0(X^\sm) \cong
  H^1(k,\Pic(\Xbar^\sm)$ is finite.

  Hence by Remark~\ref{rem:comparison}, $X$ also satisfies central strong
  approximation with algebraic Brauer--Manin obstruction off $\infty$
  under the assumption that $K$ is not totally imaginary or that
  $g_1(\ttt), \dots, g_r(\ttt)$ are linear forms.
\end{remark}

By Remark~\ref{rem:smooth_sa_hp}, Theorem~\ref{thm:totally_split}
implies that $X$ also satisfies the smooth integral Hasse principle
(in the sense of Definition~\ref{def:smooth_sa_hp}). This can be
stated explicitly as follows.

\begin{cor}\label{cor:smooth_hasse_principle}
  Let $P(\ttt) \in \ZZ[t_1, \dots, t_s]$ be a product of linear polynomials
  over $\ZZ$ that are over $\QQ$ pairwise proportional or affinely
  independent. Let $K/\QQ$ be an extension of number fields of degree $n$, and
  let $N_{K/\QQ}(\zz) \in \ZZ[z_1, \dots, z_n]$ be an associated norm form with
  integral coefficients. Suppose that $K$ is not totally imaginary or that the
  factors of $P(\ttt)$ are linear forms. Let $\XX \subset \AAA_\ZZ^{n+s}$ be
  the affine scheme defined by
  \begin{equation*}
    P(\ttt) = N_{K/\QQ}(\zz).
  \end{equation*}
  Let $X = \XX_\QQ$ be the generic fiber.

  If there are $q_v=(\ttt_v,\zz_v) \in \XX(\ZZ_v)$ for all finite
  places $v \in \Omega_\QQ$ and $q_\infty=(\ttt_\infty,\zz_\infty) \in
  X(\RR)$ such that $(q_v)_{v \in \Omega_\QQ}$ is orthogonal to
  $\Br_1(X^\sm)$, with $P(\ttt_v) \in \ZZ_v^\times$ for almost all $v
  < \infty_k$ and $P(\ttt_v) \in k_v^\times$ for all other $v \in
  \Omega_\QQ$, then there are integral points on $\XX$.
\end{cor}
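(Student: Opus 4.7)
The plan is to deduce this corollary directly from Theorem~\ref{thm:totally_split} via the mechanism of Remark~\ref{rem:smooth_sa_hp}. The hypotheses on $P(\ttt)$ and on $K$ are precisely those under which Theorem~\ref{thm:totally_split} delivers smooth strong approximation with algebraic Brauer--Manin obstruction off $\infty$, so most of the work has already been done; what remains is to translate the given integral-adelic hypothesis into the form demanded by that theorem and then extract an integral rational point from a small enough adelic neighbourhood.

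First I would verify that the given $(q_v)$ actually lies in $X^\sm(\AAA_\QQ)^{\Br_1(X^\sm)}$. The orthogonality to $\Br_1(X^\sm)$ is part of the hypothesis, so only smoothness requires comment. Writing $P(\ttt) = cg_1(\ttt)^{e_1}\cdots g_r(\ttt)^{e_r}$ as in Proposition~\ref{prop:universal_torsors}, the singular locus of $\Xbar$ (as already recorded in the proof of Lemma~\ref{lem:connected_components}) is contained in $\{P(\ttt)=0\}$. Since the hypothesis imposes $P(\ttt_v) \in \ZZ_v^\times$ for almost all $v<\infty_\QQ$ and $P(\ttt_v) \in k_v^\times$ otherwise, in particular $P(\ttt_v) \ne 0$ at every place, so each local component $q_v$ is a smooth point of $X$ and $(q_v) \in X^\sm(\AAA_\QQ)$.

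Next, because we are in the regime where $K$ is not totally imaginary, or else the $g_i(\ttt)$ are linear forms, the second half of Theorem~\ref{thm:totally_split} guarantees that $C = p(X^\sm(\RR))$. Thus the archimedean constraint $\ttt_\infty \in C$ is automatic for any $(q_v) \in X^\sm(\AAA_\QQ)$, so $(q_v)$ lies in $(p^{-1}(C) \times X^\sm(\AAA_\QQ^f))^{\Br_1(X^\sm)}$. Theorem~\ref{thm:totally_split} therefore asserts that $X^\sm(\QQ)$ is dense in the image of this set in $X(\AAA_\QQ^f)$, which is exactly smooth strong approximation with algebraic Brauer--Manin obstruction off $\infty$.

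Finally, I apply the argument of Remark~\ref{rem:smooth_sa_hp}. The set $\prod_{v<\infty_\QQ} \XX(\ZZ_v)$ is an open neighbourhood of $(q_v)_{v<\infty_\QQ}$ in $X(\AAA_\QQ^f)$, so smooth strong approximation off $\infty$ produces a point $q \in X^\sm(\QQ)$ whose image in $X(\AAA_\QQ^f)$ lies in this neighbourhood; in other words $q \in \XX(\ZZ_v)$ for every finite place $v$, hence $q \in \XX(\ZZ)$. The only subtlety worth flagging is the verification of smoothness of $(q_v)$ from the integrality/unit hypotheses on $P(\ttt_v)$; once that is in place, the corollary is a formal consequence of Theorem~\ref{thm:totally_split}, and there is no genuinely hard step remaining.
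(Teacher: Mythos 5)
Your proof is correct and follows the same route as the paper's: verify that the unit/nonvanishing conditions on $P(\ttt_v)$ force $(q_v)$ into $U(\AAA_\QQ) \subset X^\sm(\AAA_\QQ)$, conclude that $(q_v)$ lies in the set~(\ref{eq:smooth_hp_brauer_set}), and then invoke Theorem~\ref{thm:totally_split} together with Remark~\ref{rem:smooth_sa_hp} to produce a point of $\XX(\ZZ)$. The only difference is cosmetic — you spell out the smoothness verification via the description of the singular locus from Lemma~\ref{lem:connected_components}, whereas the paper simply observes $U \subset X^\sm$.
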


\begin{proof}
  The conditions on $P(\ttt_v)$ ensure that $(q_v) \in U(\AAA_k)
  \subset X^\sm(\AAA_k)$, where $U := X \cap \{P(\ttt) \ne 0\} \subset
  X^\sm$. Hence $(q_v)$ lies in the set
  (\ref{eq:smooth_hp_brauer_set}). As in
  Remark~\ref{rem:smooth_sa_hp}, we obtain an integral point on $\XX$.
\end{proof}

\section{A counterexample}\label{sec:counterexample}

Now we consider the case that $[K:k]=2$ and $P(t)$ is the product of
pairwise distinct linear factors.

\begin{prop}\label{prop:Brauer_quadratic_split}
  Let $k$ be a field of characteristic $0$ with $H^3(k,
  \kbar^\times)=0$. Let $K:=k(\sqrt d)$ be a quadratic extension of
  $k$. Let $P(t)=c(t-a_1)\cdots(t-a_r) \in k[t]$ with pairwise
  distinct $a_1, \dots, a_r \in k$. Let $X$ be defined by (\ref{eq:norm}) with
  $s=1$.

  Then $\Br(X)/\Br_0(X) \cong (\ZZ/2\ZZ)^{r-1}$ is generated by the
  classes of the quaternion algebras $(t-a_i,d) \in \Br(X)$, for $i=1, \dots,
  r-1$.
\end{prop}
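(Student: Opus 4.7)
The plan is to combine Lemma~\ref{lem:Br_formula}, which yields $\Br(X)/\Br_0(X) \cong H^1(k, \Pic(\Xbar))$, with an explicit Galois-cohomology computation, and then match the abstract generators with the stated quaternion algebras. Note that $X = X^\sm$ since $P$ is separable, and $\Br(\Xbar) = 0$ by Lemma~\ref{lem:transcendental_brauer}, so all hypotheses of Lemma~\ref{lem:Br_formula} are in place.

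First, I would make the permutation modules $\That$ and $\Mhat$ explicit. Over $\kbar$ factor $N_{K/k}(\zz) = uv$ with $u = z_1 + \sqrt{d}\,z_2$ and $v = z_1 - \sqrt{d}\,z_2$; then $\That = \kbar[U]^\times/\kbar^\times$ is freely generated by $u, t-a_1, \ldots, t-a_r$ modulo the single scalar relation $uv = c\prod_i(t-a_i)$, so a $\ZZ$-basis is $u$ together with the $t-a_i$ (with $v \equiv \sum_i(t-a_i) - u$). The complement of $\Ubar$ in $\Xbar$ consists of the $2r$ lines $D_i^{\pm} = \{t = a_i\} \cap \{u = 0 \text{ (resp.\ } v = 0)\}$, giving $\Mhat \cong \bigoplus_{i=1}^r \ZZ[K/k]$, an induced $\Gal(K/k)$-module. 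A local computation at the generic point of each $D_i^{\pm}$ yields the divisor map $u \mapsto \sum_i D_i^+$, $v \mapsto \sum_i D_i^-$, $t-a_i \mapsto D_i^+ + D_i^-$. The Galois group acts through $\Gal(K/k) = \langle \sigma \rangle$, with $\sigma$ swapping $u \leftrightarrow v$ and $D_i^+ \leftrightarrow D_i^-$ and fixing every $t-a_i$.

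Second, I would extract $\Pic(\Xbar)$ from the short exact sequence $0 \to \That \to \Mhat \to \Pic(\Xbar) \to 0$: in the quotient one has $D_i^- \equiv -D_i^+$ and $\sum_i D_i^+ \equiv 0$, so $\Pic(\Xbar)$ is free of rank $r-1$ and $\sigma$ acts as $-1$. Freeness gives $H^1(K, \Pic(\Xbar)) = 0$, hence by inflation--restriction $H^1(k, \Pic(\Xbar)) = H^1(K/k, \Pic(X_K))$, and the standard cyclic-group cohomology formula for the sign action of $\ZZ/2\ZZ$ on $\ZZ^{r-1}$ immediately gives $\Pic(X_K)/2\Pic(X_K) \cong (\ZZ/2\ZZ)^{r-1}$.

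Third, I would identify these abstract generators with the quaternion algebras. Each $(t-a_i, d)$ extends from $\Br(k(X))$ to $\Br(X)$: at $D_j^{\pm}$ with $j \ne i$ the function $t-a_i$ is a unit, while at $D_i^{\pm}$ the tame residue is the square class of $d$ in a residue field containing $K$, hence trivial. The identity $c\prod_i(t-a_i) = N_{K/k}(\zz)$ on $X$ together with $(N_{K/k}(\zz), d) = 0$ yields $(c,d) + \sum_{i=1}^r (t-a_i, d) = 0$ in $\Br(X)$, so $\sum_{i=1}^r (t-a_i, d) \in \Br_0(X)$, providing the single relation. Finally, the edge map $\Br_1(X)/\Br_0(X) \hookrightarrow H^1(k, \Pic(\Xbar))$ of the Hochschild--Serre spectral sequence (equivalently the cup-product description of the cyclic algebra $(t-a_i, K/k, 1)$) sends $(t-a_i, d) \bmod \Br_0(X)$ to the class of $D_i^+$ in $\Pic(X_K)/2\Pic(X_K)$; since the $D_i^+$ span $\Pic(X_K) \cong \ZZ^{r-1}$ subject only to $\sum_i D_i^+ = 0$, the reductions of $D_1^+, \ldots, D_{r-1}^+$ form a $\ZZ/2\ZZ$-basis, yielding the claim. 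The main obstacle is precisely this final identification: tracing $(t-a_i, d)$ through the spectral-sequence edge map requires either an explicit $2$-cocycle computation matching the cyclic algebra to the coboundary $\Pic(X_K) \to H^2(K/k, \That^{\Gamma_K})$, or a specialization argument at adelic points detecting the $r-1$ classes independently; either route is standard but somewhat delicate.
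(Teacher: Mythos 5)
Your computation of the abstract group $\Br(X)/\Br_0(X) \cong (\ZZ/2\ZZ)^{r-1}$ is correct and in fact takes a slightly different (and arguably more geometric) route than the paper: you compute $\Pic(\Xbar)$ explicitly as $\ZZ^{r-1}$ with $\sigma$ acting by $-1$ (via the relations $D_i^- \equiv -D_i^+$ and $\sum_i D_i^+ \equiv 0$ in $\Mhat/\That$), and then apply the cyclic cohomology formula directly. The paper instead dimension-shifts through \eqref{eq:seq_That_Mhat_Pic_K} to reach $H^2(K/k, K[U]^\times/K^\times)$, noting that $\That \cong \ZZ^{r-1} \oplus \ZZ[K/k]$. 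Both reach $(\ZZ/2\ZZ)^{r-1}$, and both reach it correctly. Your verification that $(t-a_i,d)$ lies in $\Br(X)$ by checking residues along the divisors $D_j$ is also fine; the paper uses a cover-glueing argument ($U_i \cup U_i' = X$ with $(t-a_i,d) = (P(t)/(t-a_i),d)$ in $\Br(k(X))$), but these are equivalent.

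The gap is exactly where you flag it: the claim that the Hochschild--Serre edge map $\Br_1(X)/\Br_0(X) \hookrightarrow H^1(k,\Pic(\Xbar)) \cong \Pic(X_K)/2$ sends $(t-a_i,d)$ to $[D_i^+]$ is asserted but not proved, and as you note, tracing a cyclic algebra through the low-degree exact sequence requires a cocycle computation with conventions that are easy to get wrong. The paper avoids this entirely. Its independence argument works in $\Br(k(t))$ rather than via the spectral sequence: by Faddeev's theory and \cite[Satz, p.~465]{MR1545510}, the kernel of $\Br(k(t)) \to \Br(k(X))$ is generated by $(P(t),d)$ together with constants, and $(P(t),d)$ has residue equal to the quadratic character $\chi$ of $K/k$ at \emph{every} $D_j = \{t = a_j\}$. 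Since $\beta_i = (t-a_i,d)$ has residue $\chi$ at $D_i$ and $0$ elsewhere, a combination $\sum_{i<r} n_i \beta_i$ in the kernel must have uniform residue on $D_1,\dots,D_r$; vanishing at $D_r$ then forces $n_1 = \dots = n_{r-1} = 0$. Combined with the size count $2^{r-1}$ you have already established, this gives the generation statement without ever touching the edge map. To make your proposal complete, you would need to either carry out the explicit cocycle computation identifying $(t-a_i,d)$ with the coboundary of $[D_i^+]$, or replace your final paragraph with a residue-on-$\AAA^1_k$ argument of this kind.
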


\begin{proof}
  Let $U$ be defined again by $P(t) \ne 0$. Analogously to
  (\ref{eq:That_kbar_over_k}), we have the exact sequence
  \begin{equation*}
    0 \to \ZZ \to \ZZ^r \oplus \ZZ[K/k] \to \That \to 0,
  \end{equation*}
  hence $\That \cong \ZZ^{r-1} \oplus \ZZ[K/k]$ as a
  $\Gamma_k$-module. Furthermore, we have $\Mhat \cong \ZZ[K/k]^r$.

  We have
  \begin{equation*}
    H^1(k, \Pic(\Xbar)) \cong H^1(K/k, \Pic(X_K)) \cong H^2(K/k, K[U]^\times/K^\times) \cong (\ZZ/2\ZZ)^{r-1}.
  \end{equation*}
  Indeed, for the first isomorphism, we note that $\Pic(\Xbar)$ is
  torsion-free and split by $K$, for the second isomorphism, we use
  (\ref{eq:seq_That_Mhat_Pic_K}) and $H^i(K/k, \Mhat)=0$ for $i>0$
  since $\Mhat \cong \ZZ[K/k]^r$ is an induced $\Gal(K/k)$-module, and
  for the third isomorphism, we note that $K[U]^\times/K^\times$ is
  $\ZZ^{r-1} \oplus \ZZ[K/k]$ as a $\Gal(K/k)$-module, as in our
  computation of $\That$ above.

  Since $X$ is smooth, Lemma~\ref{lem:Br_formula} implies
  $\Br(X)/\Br_0(X) \cong (\ZZ/2\ZZ)^{r-1}$.

  Now we describe $\Br(X)/\Br_0(X)$ explicitly. Let $\beta_i$ be the
  quaternion algebra $(t-a_i,d)$ over $k(X)$.

  First we show that $\beta_i \in \Br(X)$. Indeed, $(t-a_i,d)$ is clearly
  well-defined on $U_i = \{t \ne a_i\} \subset X$, and
  $(P(t)/(t-a_i),d)$ is well-defined on $U_i' = \bigcap_{j \ne i} \{t
  \ne a_j\}$. Since $U_i \cup U_i' = X$ and
  $(t-a_i,d)=(P(t)/(t-a_i),d)$ in $\Br(k(X))$ (since $P(t) =
  N_{K/k}(\zz)$ implies $(P(t),d)=(N_{K/k}(\zz),d) = 0$), we can
  extend $(t-a_i,d)$ to a well-defined element of $\Br(X)$.

  Next we show that $\beta_1, \dots, \beta_{r-1}$ are $\ZZ/2\ZZ$-linearly independent
  elements in $\Br(k(X))/\Br_0(X)$. Again let $p : X \to \AAA^1_k$ be the
  projection to the $t$-coordinate. This induces a map $\Br(k(t)) \to
  \Br(k(X))$. Let $\eta$ be the generic point of $\AAA^1_k$ and $X_\eta$ the
  generic fiber of $p$. By \cite[Satz, p.~465]{MR1545510}, the kernel of
  $\Br(k(t)) \to \Br(k(X))$ is generated by $(P(t),d)$, hence the induced map
  $\phi: \Br(k(t)) \to \Br(k(X))/\Br_0(X)$ has kernel generated by $(P(t),d)$
  and $\Br(k)$. For $n_1, \dots, n_{r-1} \in \ZZ/2\ZZ$, let $\beta :=
  \sum_{i=1}^{r-1} n_i\beta_i \in \Br(k(t))$. We claim that $\beta \in \ker\phi$ if
  and only if $\beta=0$, which would imply the linear independence of $\beta_1, \dots,
  \beta_{r-1}$ and complete the proof of this theorem.

  Indeed, let $\chi$ be the non-trivial character of $\Gal(K/k)$, and
  let $D_i$ be the divisor $\{t=a_i\}$ of $\AAA^1_k$, for $i=1, \dots,
  r$. By \cite[Proposition~1.1.3]{MR1285781}, the residue of $\beta_i$ on
  $D_i$ is $\chi$, and the residue of $\beta_i$ on any other divisor of
  $\AAA^1_k$ is $0$. Hence the residue of $(P(t),d)$ is $\chi$ on $D_1,
  \dots, D_r$ and $0$ on any other divisor, and any element of $\ker
  \phi$ has the same residue ($\chi$ or $0$) on all of $D_1, \dots,
  D_r$. But since $\beta$ has residue $0$ on $D_r$, it must have residue
  $0$ on $D_1,\dots, D_{r-1}$ as well, which is only possible for
  $n_1=\dots=n_{r-1}=0$, i.e., $\beta=0$.
\end{proof}

The following example illustrates that in Question~\ref{qu:strong}, an
unboundedness condition at an archimedean place is necessary. We
construct a variety over $\QQ$ with one bounded and one unbounded
connected component over $\RR$ and a point orthogonal to the Brauer
group lying on the bounded component that cannot be approximated
arbitrarily close. In particular, it is not enough to require that
$X(k_v)$ is unbounded for one archimedean $v$.

\begin{example}\label{exa:bounded}
  Let $X \subset \AAA^3_\QQ$ be defined by
  \begin{equation}\label{eq:example}
    t(t-2)(t-10)=x^2+y^2.
  \end{equation}
  We note that the projection $p: X \to \AAA^1_\QQ$ to the
  $t$-coordinate has the bounded connected component $[0,2]$ and the
  unbounded connected component $[10,\infty)$. In fact, $X(\RR)$ has
  precisely two connected components, namely $X_1:=p^{-1}([0,2])$ bounded,
  $X_2:=p^{-1}([10,\infty))$ unbounded.

  Our variety $X$ has an adelic point $(q_v) \in X(\RR) \times
  \prod_p X(\ZZ_p)$ given by
  \begin{equation*}
    q_v = (t_v,x_v,y_v) :=
    \begin{cases}
      (5,x_5,y_5), & v = 5,\\
      (1,3,0), & v \ne 5
    \end{cases}
  \end{equation*}
  where $(x_5,y_5) \in \ZZ_5^2$ is a solution of $x_5^2+y_5^2=-75$,
  which exists by Hensel's lemma.

  This point is orthogonal to $\Br(X)$. Indeed, $\Br(X)/\Br_0(X)$ is generated
  by $\beta_1:=(t,-1)$ and $\beta_2:=(t-2,-1)$ by
  Proposition~\ref{prop:Brauer_quadratic_split}. We have
  \begin{align*}
    \sum_{v \in \Omega_\QQ} \beta_i(q_v) &= \sum_{v \ne 5} \inv_v(\beta_i((1,3,0))) + \inv_5(\beta_i((5,x_5,y_5)))\\
    &=\inv_5(\beta_i((1,3,0))) + \inv_5(\beta_i((5,x_5,y_5))) = 0
  \end{align*}
  since $(1,-1)_5=(5,-1)_5=(-1,-1)_5=(3,-1)_5=0$.

  Our goal is to show by contradiction that there is no integral point
  $q=(t,x,y) \in X(\ZZ)$ that is very close to our adelic point
  $(q_v)$ at the places $2$ and $5$. More precisely, we show that
  $|t-1|_2 \le \frac 1 8$ and $|t-5|_5 \le \frac 1{25}$ are
  impossible. This is done via the following claim.

  \textbf{Claim:} Given $(q_v')=(t_v',x_v',y_v') \in (X(\RR) \times
  \prod_p X(\ZZ_p))^{\Br(X)}$ satisfying $|t_2'-1|_2 \le \frac 1 8$, then
  $q_\infty'$ lies in the bounded real component $X_1$.

  This claim leads to our goal as follows. A point $q \in X(\ZZ)$ very
  close to $(q_v)$ at $2$ and $5$ lies on $X_1$ by the claim. By the
  product formula, we get the contradiction
  \begin{equation*}
    1 = \prod_{v \in \Omega_k} |t|_v
    \le |t|_\infty \cdot |t|_5 \le 2 \cdot\frac 1 5 < 1,
  \end{equation*}
  using $t \in \ZZ \setminus \{0\}$ and $|t-5|_5 \le
  \frac{1}{25}$. Hence such a $q \in X(\ZZ)$ does not exist, and our
  adelic point $(q_v)$ cannot be approximated closely.

  It remains to prove the claim. For finite $p \ne 2,5$ and any
  $q_p'=(t_p',x_p',y_p') \in X(\ZZ_p)$, we claim
  \begin{equation}\label{eq:Br_0_not_25}
    \text{$\beta_i(q_p)=0$ in $\Br(\QQ_p)$ for $i=1,2$.}
  \end{equation}
  If the Legendre symbol $\left(\frac{-1}p\right)=1$, then
  $(t_p',-1)=0$ in $\Br(\QQ_p)$ for any $t_p' \in
  \QQ_p^\times$. Therefore, we may assume
  $\left(\frac{-1}p\right)=-1$, hence $\QQ_p(\sqrt{-1})/\QQ_p$ is
  unramified of degree $2$, so $x_p'^2+y_p'^2$ has even valuation at
  $p$. If $v_p(t_p')=0$, then $(t_p',-1)=0$ in $\Br(\QQ_p)$. If
  $v_p(t_p')\ge 1$, then $v_p(t_p'-2)=v_p(t_p'-10)=0$; because of
  (\ref{eq:example}), $v_p(t_p')$ is also even, hence $(t_p',-1)=0$ in
  $\Br(\QQ_p)$. Similarly, we prove $(t_p'-2,-1)=0$.

  For any $(q_v')=(t_v',x_v',y_v') \in (X(\RR) \times \prod_p
  X(\ZZ_p))^{\Br(X)}$ with $|t-1|_2 \le \frac 1 8$, obviously $t \ne
  0$. By (\ref{eq:Br_0_not_25}),
  \begin{align*}
    &\sum_{v \in \Omega_k} \inv_v(\beta_i(q))=\inv_2(\beta_i(q))+\inv_5(\beta_i(q))+\inv_\infty(\beta_i(q))\\
    &=
    \begin{cases}
      (t,-1)_\infty,& i=1,\\
      1+(t-2,-1)_\infty,& i=2.
    \end{cases}
  \end{align*}
  since $(t,-1)_5=(t-2,-1)_5=0$ (because
  $\left(\frac{-1}{5}\right)=1$) and $(t,-1)_2=(1,-1)_2=0$ and
  $(t-2,-1)=(1-2,-1)_2=1$ (because $|t-1|_2 \le \frac 1 8$).

  By global class field theory, $\sum_{v \in \Omega_k} \inv_v(\beta_i(q))
  = 0$, hence $(t,-1)_\infty=0$ and $(t-2,-1)_\infty=1$. Therefore, $0
  \le t \le 2$, hence $q'_\infty \in X_1$, which was our
  claim.
\end{example}

\bibliographystyle{alpha}

\bibliography{norm_strong}

\begin{thebibliography}{CTSSD87b}

\bibitem[BD13]{MR3008912}
M.~Borovoi and C.~Demarche.
\newblock Manin obstruction to strong approximation for homogeneous spaces.
\newblock {\em Comment. Math. Helv.}, 88(1):1--54, 2013.

\bibitem[BHB12]{MR2989431}
T.~D. Browning and D.~R. Heath-Brown.
\newblock Quadratic polynomials represented by norm forms.
\newblock {\em Geom. Funct. Anal.}, 22(5):1124--1190, 2012.

\bibitem[BM13]{arXiv:1307.7641}
T.~D. Browning and L.~Matthiesen.
\newblock {Norm forms for arbitrary number fields as products of linear
  polynomials}, arXiv:1307.7641, 2013.

\bibitem[BMS14]{MR3194818}
T.~D. Browning, L.~Matthiesen, and A.~N. Skorobogatov.
\newblock Rational points on pencils of conics and quadrics with many
  degenerate fibers.
\newblock {\em Ann. of Math. (2)}, 180(1):381--402, 2014.

\bibitem[Bor96]{MR1390687}
M.~Borovoi.
\newblock The {B}rauer-{M}anin obstructions for homogeneous spaces with
  connected or abelian stabilizer.
\newblock {\em J. reine angew. Math.}, 473:181--194, 1996.

\bibitem[CT03]{MR2011747}
J.-L. Colliot-Th{\'e}l{\`e}ne.
\newblock Points rationnels sur les fibrations.
\newblock In {\em Higher dimensional varieties and rational points ({B}udapest,
  2001)}, volume~12 of {\em Bolyai Soc. Math. Stud.}, pages 171--221. Springer,
  Berlin, 2003.

\bibitem[CTCS80]{MR592151}
J.-L. Colliot-Th{\'e}l{\`e}ne, D.~Coray, and J.-J. Sansuc.
\newblock Descente et principe de {H}asse pour certaines vari\'et\'es
  rationnelles.
\newblock {\em J. reine angew. Math.}, 320:150--191, 1980.

\bibitem[CTH12]{arXiv:1209.0717}
J.-L. Colliot-Th{\'e}l{\`e}ne and D.~Harari.
\newblock {Approximation forte en famille}.
\newblock {\em J. reine angew. Math., to appear}, arXiv:1209.0717, 2012.

\bibitem[CTS87]{MR89f:11082}
J.-L. Colliot-Th{\'e}l{\`e}ne and J.-J. Sansuc.
\newblock La descente sur les vari\'et\'es rationnelles. {II}.
\newblock {\em Duke Math. J.}, 54(2):375--492, 1987.

\bibitem[CTSD94]{MR1285781}
J.-L. Colliot-Th{\'e}l{\`e}ne and P.~Swinnerton-Dyer.
\newblock Hasse principle and weak approximation for pencils of
  {S}everi-{B}rauer and similar varieties.
\newblock {\em J. reine angew. Math.}, 453:49--112, 1994.

\bibitem[CTSSD87a]{MR870307}
J.-L. Colliot-Th{\'e}l{\`e}ne, J.-J. Sansuc, and P.~Swinnerton-Dyer.
\newblock Intersections of two quadrics and {C}h\^atelet surfaces. {I}.
\newblock {\em J. reine angew. Math.}, 373:37--107, 1987.

\bibitem[CTSSD87b]{MR876222}
J.-L. Colliot-Th{\'e}l{\`e}ne, J.-J. Sansuc, and P.~Swinnerton-Dyer.
\newblock Intersections of two quadrics and {C}h\^atelet surfaces. {II}.
\newblock {\em J. reine angew. Math.}, 374:72--168, 1987.

\bibitem[CTW12]{MR2975237}
J.-L. Colliot-Th{\'e}l{\`e}ne and O.~Wittenberg.
\newblock Groupe de {B}rauer et points entiers de deux familles de surfaces
  cubiques affines.
\newblock {\em Amer. J. Math.}, 134(5):1303--1327, 2012.

\bibitem[CTX09]{MR2501421}
J.-L. Colliot-Th{\'e}l{\`e}ne and F.~Xu.
\newblock Brauer-{M}anin obstruction for integral points of homogeneous spaces
  and representation by integral quadratic forms.
\newblock {\em Compos. Math.}, 145(2):309--363, 2009.
\newblock With an appendix by D. Wei and Xu.

\bibitem[CTX13]{MR3007293}
J.-L. Colliot-Th{\'e}l{\`e}ne and F.~Xu.
\newblock Strong approximation for the total space of certain quadratic
  fibrations.
\newblock {\em Acta Arith.}, 157(2):169--199, 2013.

\bibitem[Dem11]{MR2783137}
C.~Demarche.
\newblock Le d\'efaut d'approximation forte dans les groupes lin\'eaires
  connexes.
\newblock {\em Proc. Lond. Math. Soc. (3)}, 102(3):563--597, 2011.

\bibitem[DSW12]{arXiv:1202.3567}
U.~Derenthal, A.~Smeets, and D.~Wei.
\newblock {Universal torsors and values of quadratic polynomials represented by
  norms}.
\newblock {\em Math. Ann., to appear}, arXiv:1202.3567, 2012.

\bibitem[Gun13]{MR3106738}
F.~Gundlach.
\newblock Integral {B}rauer-{M}anin obstructions for sums of two squares and a
  power.
\newblock {\em J. Lond. Math. Soc. (2)}, 88(2):599--618, 2013.

\bibitem[Har94]{MR1284820}
D.~Harari.
\newblock M\'ethode des fibrations et obstruction de {M}anin.
\newblock {\em Duke Math. J.}, 75(1):221--260, 1994.

\bibitem[Har08]{MR2429455}
D.~Harari.
\newblock Le d\'efaut d'approximation forte pour les groupes alg\'ebriques
  commutatifs.
\newblock {\em Algebra Number Theory}, 2(5):595--611, 2008.

\bibitem[HBS02]{MR1961196}
D.~R. Heath-Brown and A.~N. Skorobogatov.
\newblock Rational solutions of certain equations involving norms.
\newblock {\em Acta Math.}, 189(2):161--177, 2002.

\bibitem[HSW13]{arXiv:1304.3333}
Y.~Harpaz, A.~N. Skorobogatov, and O.~Wittenberg.
\newblock {{The Hardy--Littlewood conjecture and rational points}}.
\newblock {\em Compos. Math., to appear}, arXiv:1304.3333, 2013.

\bibitem[Irv13]{arXiv:1310.6158}
A.~J. Irving.
\newblock {Cubic polynomials represented by norm forms}, arXiv:1310.6158, 2013.

\bibitem[Kar87]{MR1200015}
G.~Karpilovsky.
\newblock {\em The {S}chur multiplier}, volume~2 of {\em London Mathematical
  Society Monographs. New Series}.
\newblock The Clarendon Press Oxford University Press, New York, 1987.

\bibitem[KT08]{MR2471948}
A.~Kresch and Yu. Tschinkel.
\newblock Two examples of {B}rauer-{M}anin obstruction to integral points.
\newblock {\em Bull. Lond. Math. Soc.}, 40(6):995--1001, 2008.

\bibitem[Man71]{MR0427322}
Yu.~I. Manin.
\newblock Le groupe de {B}rauer-{G}rothendieck en g\'eom\'etrie diophantienne.
\newblock In {\em Actes du {C}ongr\`es {I}nternational des {M}ath\'ematiciens
  ({N}ice, 1970), {T}ome 1}, pages 401--411. Gauthier-Villars, Paris, 1971.

\bibitem[PR94]{MR1278263}
V.~Platonov and A.~Rapinchuk.
\newblock {\em Algebraic groups and number theory}, volume 139 of {\em Pure and
  Applied Mathematics}.
\newblock Academic Press Inc., Boston, MA, 1994.
\newblock Translated from the 1991 Russian original by Rachel Rowen.

\bibitem[Sko99]{MR1666779}
A.~N. Skorobogatov.
\newblock Beyond the {M}anin obstruction.
\newblock {\em Invent. Math.}, 135(2):399--424, 1999.

\bibitem[SS14]{MR3188633}
D.~Schindler and A.~N. Skorobogatov.
\newblock Norms as products of linear polynomials.
\newblock {\em J. Lond. Math. Soc. (2)}, 89(2):559--580, 2014.

\bibitem[Wei12]{arXiv:1202.4115}
D.~Wei.
\newblock {On the equation $N_{K/k}(\Xi)=P(t)$}.
\newblock {\em Proc. Lond. Math. Soc. (3), to appear}, arXiv:1202.4115, 2012.

\bibitem[Wei14]{arXiv:1403.1035}
D.~Wei.
\newblock {Strong approximation for the variety containing a torus},
  arXiv:1403.1035, 2014.

\bibitem[Wit35]{MR1545510}
E.~Witt.
\newblock \"{U}ber ein {G}egenbeispiel zum {N}ormensatz.
\newblock {\em Math. Z.}, 39(1):462--467, 1935.

\bibitem[WX12]{MR2928335}
D.~Wei and F.~Xu.
\newblock Integral points for multi-norm tori.
\newblock {\em Proc. Lond. Math. Soc. (3)}, 104(5):1019--1044, 2012.

\bibitem[WX13]{MR2989976}
D.~Wei and F.~Xu.
\newblock Integral points for groups of multiplicative type.
\newblock {\em Adv. Math.}, 232:36--56, 2013.

\end{thebibliography}

\end{document}